\newtheorem*{theorem*}{Theorem}
\newtheorem{theorem}{Theorem}
\crefname{theorem}{Theorem}{Theorems}
\newtheorem{proposition}{Proposition}
\crefname{proposition}{Proposition}{Propositions}
\newtheorem{lemma}{Lemma}
\crefname{lemma}{Lemma}{Lemmas}
\newtheorem{corollary}{Corollary}
\crefname{corollary}{Corollary}{Corollaries}
\crefname{section}{Section}{Sections}
\theoremstyle{remark}
\theoremstyle{definition}
\newtheorem{definition}{Definition}
\crefname{definition}{Definition}{Definitions}
\DeclareMathOperator{\re}{Re}
\newcolumntype{M}[1]{l>{\hbox to #1\bgroup\hss$}l<{$\egroup}}
\newcommand\@brcolwidth{1em}
\newenvironment{blmatrix}{%
    \left[%
    \hskip-\arraycolsep
    \new@ifnextchar[\@brarray{\@brarray[\@brcolwidth]}%
}{%
    \endarray
    \hskip -\arraycolsep
    \right]%
}
\def\@brarray[#1]{\array{l*\c@MaxMatrixCols {M{#1}}}}
\begin{document}

\begin{frontmatter}
\title{The Multivariate Rate of Convergence for\\ Selberg's Central Limit Theorem}
\runtitle{The Rate of Convergence for Selberg's CLT}

\begin{aug}
\author[A]{\fnms{Asher}~\snm{Roberts}\ead[label=e1]{aroberts@gradcenter.cuny.edu}}
\address[A]{Graduate Center, City University of New York, New York, NY 10016\printead[presep={,\ }]{e1}}
\end{aug}

\begin{abstract}
In this paper we quantify the rate of convergence in Selberg's central limit theorem for \(\log|\zeta(1/2+it)|\) based on the method of proof given by Radziwi\l\l\ and Soundararajan in \cite{rands}. We achieve the same rate of convergence of \((\log\log\log T)^2/\sqrt{\log\log T}\) as Selberg in \cite{selbergamalfi} in the Kolmogorov distance by using the Dudley distance instead. We also prove the theorem for the multivariate case given by Bourgade in \cite{mesoscopic} with the same rate of convergence as in the single variable case.
\end{abstract}

\begin{keyword}[class=MSC]
\kwd[Primary ]{11M06}
\kwd{Riemann Zeta function}
\kwd[; secondary ]{60F05, central limit theorem}
\end{keyword}


\end{frontmatter}

\section{Introduction}

Let \(\tau\) be a random point distributed uniformly on \([T,2T]\). Selberg's central limit theorem states that as \(T\to\infty\), 

\[\mathbb{P}\left(\log|\zeta(\tfrac{1}{2}+i\tau)|> V\sqrt{\tfrac{1}{2}\log\log T}\right)\to\frac{1}{\sqrt{2\pi}}\int_V^\infty e^{-u^2/2}du,\quad \forall V\in\mathbb{R}.\]

\noindent We adapt the techniques of Radziwi\l\l\ and Soundararajan in \cite{rands} that proved Selberg's central limit theorem for \(\log|\zeta(1/2+it)|\) to prove \cref{thm:selbergcltrate}, and consequently prove the multivariate theorem of Bourgade \cite{mesoscopic} stated in \cref{cor:multivariateselbergclt}.
	 
\subsection{Main Results}

\begin{definition}
The Dudley (bounded Wasserstein) distance between two random vectors in \(\mathbb{R}^n\) defined on a probability space \((\Omega,\mathcal{F},\mathbb{P})\) is given by \[d_{\mathcal{D}}(X,Y)=\sup_{f\in\mathcal{L}}|\mathbb{E}[f(X)]-\mathbb{E}[f(Y)]|\] where \(\mathcal{L}=\{f:\mathbb{R}^n\to\mathbb{R}\mid f\) is Lipschitz, \(\|f\|_\infty\leq 1\) and \(\|f\|_{\text{Lip}}\leq 1\}\). Here and throughout the paper \(\|f\|_\infty\) denotes the supremum norm of \(f\) and \(\|f\|_{\text{Lip}}=\sup_{x\ne y}\frac{|f(x)-f(y)|}{\|x-y\|_2}\) with \(\|\cdot\|_2\) denoting the \(L^2\) norm.
\end{definition}	

We use the Dudley distance (e.g. see \cite{originaldudley} and \cite{dudley}) to estimate the rate of convergence. The boundedness property of this metric helps us when estimating the error associated with mollifying \(\zeta(s)\) and connecting our mollifer to prime sums (\cref{prop:mollifying,prop:approximatingthemollifier}). The Lipschitz property helps us when computing the remaining estimates needed, since computing the error associated with moving off axis and with truncating prime sums (\cref{prop:movingoffaxis,prop:truncatingprimesum}) would be more difficult using the Kolmogorov distance.\\

The main result of the paper is \cref{thm:selbergcltrate}. We use the Vinogradov notation \(f\ll g\) to mean \(f= O(g)\) as \(T\to\infty\).
	 
\begin{theorem}\label{thm:selbergcltrate}
Let \(\tau\) be a random point distributed uniformly on \([T,2T]\), and let \(|h-h'|\sim(\log T)^{-\alpha}\) for \(\alpha\in(0,1)\). Then for \(T\) large enough, \[d_{\mathcal{D}}\left[\left(\frac{\log|\zeta(\tfrac{1}{2}+i(\tau+h))|}{\sqrt{\frac{1}{2}\log\log T}},\frac{\log|\zeta(\tfrac{1}{2}+i(\tau+h'))|}{\sqrt{\frac{1}{2}\log\log T}}\right),\left(\mathcal{Z},\mathcal{Z}'\right)\right]\ll\frac{(\log\log\log T)^2}{\sqrt{\log\log T}},\]
where \((\mathcal{Z},\mathcal{Z}')\) is a Gaussian vector with mean \(0\) and covariance matrix \(\begin{pmatrix}
1 & \alpha \\
\alpha & 1
\end{pmatrix}\).
\end{theorem}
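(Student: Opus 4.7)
The plan is to follow the Radziwiłł--Soundararajan strategy, extended to two points, by successively replacing the vector
$(\log|\zeta(\tfrac12+i(\tau+h))|, \log|\zeta(\tfrac12+i(\tau+h'))|)$ by objects that are easier to analyze, controlling the error at each step in the Dudley metric. Concretely, I would chain the four propositions already advertised in the introduction: first \cref{prop:mollifying} to pass from $\log|\zeta|$ to a smooth mollified version; then \cref{prop:approximatingthemollifier} to identify the logarithm of the mollifier with a short sum over prime powers; then \cref{prop:movingoffaxis} to shift the evaluation slightly off the critical line to some $\sigma_0 = 1/2 + c/\log T$ where moments behave; and finally \cref{prop:truncatingprimesum} to reduce the prime sum to one supported on $p \leq X$ for a carefully chosen truncation parameter $X$ (some small power of $T$). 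Because each of these steps is phrased in the Dudley metric, the triangle inequality accumulates the errors coordinate-wise, and the boundedness of test functions absorbs the exceptional set on which $|\zeta|$ is atypically small.

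After this reduction, the problem becomes: given a random vector of the form
\[
\mathbf{P}(\tau) = \Bigl(\sum_{p \leq X} \tfrac{\cos((\tau+h)\log p)}{p^{\sigma_0}}, \; \sum_{p \leq X} \tfrac{\cos((\tau+h')\log p)}{p^{\sigma_0}}\Bigr) \Big/ \sqrt{\tfrac12\log\log T},
\]
show that its Dudley distance to the target Gaussian $(\mathcal{Z},\mathcal{Z}')$ is $O((\log\log\log T)^2/\sqrt{\log\log T})$. I would do this in two sub-steps. First, compare $\mathbf{P}(\tau)$ to a model vector $\mathbf{P}^{\#}$ built from independent random phases $\{\theta_p\}_{p \leq X}$: this is a standard computation using the mean value theorem for Dirichlet polynomials, with error of size $X^{O(1)}/T$, which is negligible. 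Second, the model vector $\mathbf{P}^{\#}$ is a sum of $\pi(X)$ many independent bounded-variance random vectors in $\mathbb{R}^2$, so a multivariate Berry--Esseen / Lindeberg bound (e.g. a Stein-type bound in the Dudley metric) yields convergence to the Gaussian with variance $\Sigma_X$ at rate controlled by the third-moment-to-variance ratio, which is $\asymp 1/\sqrt{\log\log T}$ after the usual Mertens-type estimates. The extra $(\log\log\log T)^2$ factor should track back to the mollification/truncation losses in the first stage, consistent with the bound stated.

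The covariance computation is where the exponent $\alpha$ enters: one has
\[
\mathbb{E}\bigl[P_1(\tau) P_2(\tau)\bigr] \;=\; \tfrac{1}{\log\log T}\sum_{p\leq X} \tfrac{\cos((h-h')\log p)}{p^{2\sigma_0}} + O(1/T),
\]
and splitting the sum at $p \leq \exp(1/|h-h'|)$ versus the complementary range, using $|h-h'|\sim(\log T)^{-\alpha}$, the first range contributes $\sim \alpha \log\log T$ and the oscillatory tail is lower order. After normalization this converges to $\alpha$, with a discrepancy again of size $\ll 1/\sqrt{\log\log T}$ or smaller, which is absorbed into the target rate; one then replaces $\Sigma_X$ by the limiting $\begin{pmatrix}1&\alpha\\\alpha&1\end{pmatrix}$ at the cost of a Lipschitz comparison between the two bivariate Gaussians, bounded by the operator norm of the difference of covariances.

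The main obstacle will be the multivariate Berry--Esseen step in the Dudley metric with the claimed rate: one has to make sure that the off-diagonal correlations do not degrade the classical one-dimensional rate, and that the bound is uniform in the covariance parameter $\alpha \in (0,1)$ (so that no implicit constant blows up as $\alpha \to 1$, which corresponds to $h$ and $h'$ nearly coinciding). Handling this uniformity, and matching the power of $\log\log\log T$ coming out of the four reduction propositions with the $1/\sqrt{\log\log T}$ from the CLT, is the delicate part; everything else is bookkeeping with the triangle inequality.
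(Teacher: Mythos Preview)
The chain-of-propositions strategy is right, but you have the order and the key parameter wrong, and this is not cosmetic. In the paper (as in Radziwi\l\l--Soundararajan) one moves off axis \emph{first}, to $\sigma_0 = \tfrac12 + W/\log T$, and only then mollifies: the second-moment bound $\mathbb{E}[|\zeta(\sigma_0+it) M(\sigma_0+it)-1|^2] = o(1)$ of \cref{lma:zetaMestimate} requires $\sigma_0 > \tfrac12$, since the relevant Euler product $\prod_{p>X}(1-p^{-2\sigma_0})^{-1}$ diverges on the line. More importantly, your choice $\sigma_0 = \tfrac12 + c/\log T$ with $c$ fixed would leave that mollification error of order $1$. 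The paper is \emph{forced} to take $W = K(\log\log\log T)^2$ so that the mollifier of length $X = T^{1/(K'\log\log\log T)}$ gives an error $(\log\log T)^{-K/K'}$; the price is that \cref{prop:movingoffaxis} then contributes $W/\sqrt{\log\log T} = (\log\log\log T)^2/\sqrt{\log\log T}$. So the $(\log\log\log T)^2$ is exactly the cost of moving far enough off the line for mollification to succeed, not a byproduct of ``mollification/truncation losses'' as you write --- getting this tension right is the main content of the rate.

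For the CLT step your route (random-phase model followed by multivariate Berry--Esseen/Stein) is a genuine alternative and should work, but it is not what the paper does. The paper compares Fourier transforms directly by computing the moments of the linear combination $\xi P_1(s_0) + \xi' P_1(s_0')$ (\cref{lma:Pmoments}, \cref{lma:fourierexpectations}) and then invoking \cref{lma:fourierestimate}. To make $\asymp \log\log T$ moments computable without the $Y^{4k}/T$ error blowing up, the paper introduces a \emph{second}, shorter truncation at $Y = T^{1/(K'\log\log T)}$ (\cref{prop:truncatingprimesum}), not just at $X$; your proposal mentions only a single cutoff and would need an analogous device. Your covariance computation and the final Gaussian-to-Gaussian replacement are correct and match \cref{lma:covariance} and \cref{prop:comparingnormals}.
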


In \cite{selbergamalfi} Selberg claims a rate of convergence of \((\log\log\log T)^2/\sqrt{\log\log T}\) using the Kolmogorov distance, a proof of which is given by Tsang in \cite{tsang}. We have been able to replicate this rate using the Dudley distance, which suffices to prove Bourgade's theorem \cite{mesoscopic}.

\begin{corollary}\label{cor:multivariateselbergclt}
Let \(\tau\) be a random point distributed uniformly on \([T,2T]\). Then for any measurable subset \(\mathcal{B}\subset\mathbb{R}^2\), as \(T\to\infty\), \[\mathbb{P}\left(\Big(\log|\zeta(\tfrac{1}{2}+i(\tau+h))|,\log|\zeta(\tfrac{1}{2}+i(\tau+h'))|\Big)\in\sqrt{\tfrac{1}{2}\log\log T}\,\mathcal{B}\right)\to\int_{\mathcal{B}}\frac{e^{-(x^TC^{-1}x)/2}}{2\pi\sqrt{\det C}}\,dx,\]
where \(|h-h'|\sim(\log T)^{-\alpha}\) for \(\alpha\in(0,1)\) and \(C\) is the covariance matrix \(\begin{pmatrix}
1 & \alpha \\
\alpha & 1
\end{pmatrix}\).
\end{corollary}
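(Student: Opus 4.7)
The plan is to deduce this corollary from \cref{thm:selbergcltrate} via the well-known fact that the Dudley (bounded Lipschitz) distance metrizes weak convergence of Borel probability measures on $\mathbb{R}^2$; see \cite{originaldudley,dudley}. Writing $X_T:=\bigl(\log|\zeta(\tfrac{1}{2}+i(\tau+h))|,\log|\zeta(\tfrac{1}{2}+i(\tau+h'))|\bigr)/\sqrt{\tfrac{1}{2}\log\log T}$ for the normalized random vector inside the bracket in \cref{thm:selbergcltrate}, I would first read off from the theorem that $d_{\mathcal{D}}(X_T,(\mathcal{Z},\mathcal{Z}'))\to 0$ as $T\to\infty$, and hence that $X_T$ converges weakly to the bivariate Gaussian $(\mathcal{Z},\mathcal{Z}')$.

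Next I would invoke the Portmanteau theorem to upgrade weak convergence to convergence of probabilities on continuity sets of the limit law. Since $\alpha\in(0,1)$, the covariance matrix $C$ has $\det C=1-\alpha^2>0$, so $C$ is positive definite and $(\mathcal{Z},\mathcal{Z}')$ has a density $(2\pi\sqrt{\det C})^{-1}e^{-x^{T}C^{-1}x/2}$ with respect to Lebesgue measure. Absolute continuity of the limit then implies that any Borel set $\mathcal{B}\subset\mathbb{R}^2$ whose topological boundary has Lebesgue measure zero is a continuity set, and for such $\mathcal{B}$ the Portmanteau theorem yields $\mathbb{P}(X_T\in\mathcal{B})\to\int_{\mathcal{B}}(2\pi\sqrt{\det C})^{-1}e^{-x^{T}C^{-1}x/2}\,dx$.

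To finish, I would observe that the event $\{X_T\in\mathcal{B}\}$ coincides with the event $\{(\log|\zeta(\tfrac{1}{2}+i(\tau+h))|,\log|\zeta(\tfrac{1}{2}+i(\tau+h'))|)\in\sqrt{\tfrac{1}{2}\log\log T}\,\mathcal{B}\}$ by pure rescaling, which gives the conclusion in the form stated. Since \cref{thm:selbergcltrate} does all of the substantive work, no step in this deduction poses a serious obstacle; the only point that deserves mention is the interpretation of the phrase \textit{any measurable subset}, which must be read as "any Borel set whose boundary has Lebesgue measure zero" in order for the pointwise convergence of probabilities to be valid (for general measurable $\mathcal{B}$, one could always modify $\mathcal{B}$ on a Lebesgue-null set to change the left-hand side while leaving the right-hand side invariant).
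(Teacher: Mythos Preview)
Your proposal is correct and follows essentially the same approach as the paper: both deduce the corollary from \cref{thm:selbergcltrate} by using that convergence in the Dudley distance implies weak convergence (as established in \cite{originaldudley}). Your treatment is in fact more careful than the paper's one-line proof, since you make explicit the Portmanteau step and correctly flag that the phrase ``any measurable subset'' must be read as a continuity set of the limiting Gaussian law; the paper glosses over this point.
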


\subsection{Relation to Previous Results}
The multivariate extension of Selberg's central limit theorem for \(\log\zeta(\frac{1}{2}+it)\) was proven by Bourgade in \cite{mesoscopic} using techniques similar to the ones Selberg originally used to prove his theorem in \cite{selberg}. Later, Radziwi\l\l\ and Soundararajan used a more streamlined technique to prove Selberg's theorem for \(\log|\zeta(\frac{1}{2}+it)|\) in \cite{rands}, which we adapt to prove \cref{cor:multivariateselbergclt}. This presents a new proof for the multivariate extension given in \cite{mesoscopic} in the real case. Our work is ill-suited for proving the theorem in the imaginary case for the same reason as in \cite{rands}. That is, the difficulty lies in mollifying the imaginary part of \(\zeta\) with the method used in \cref{prop:mollifying}. Our work can also be extended to obtain analogous results for Dirichlet \(L\)-functions, as Hsu and Wong did in \cite{wongandhsu} to prove the central limit theorem in the single variable case using the techniques of Radziwi\l\l\ and Soundararajan.

Some modifications to the approach of \cite{rands} are needed to prove \cref{thm:selbergcltrate}. In \cite{rands} the parameter \(W\) used to move off axis is taken to be \((\log\log\log T)^4\). However, this results in the worse rate of convergence of \[\frac{(\log\log\log T)^4}{\sqrt{\log\log T}}\] when applying our techniques, prompting us to move closer to the critical line and take \(W=(\log\log\log T)^2\). We then have to correspondingly modify the parameters \(X\) and \(Y\) in our mollifer \(M\) to account for this change when mollifying \(\zeta\) in \cref{lma:zetaMestimate}. Achieving the rate of convergence given in \cref{thm:selbergcltrate} necessitates a few other changes as well. When approximating \(\log|M^{-1}(s)|\) with \(\re\mathcal{P}(s)\) in \cref{prop:approximatingthemollifier} we look directly at the difference \(\exp(-\mathcal{P}(s))-M(s)\), bypassing the Dirichlet series approximation for \(\exp(-\mathcal{P}(s))\) given by \(\mathcal{M}(s)\) in Proposition 3 of \cite{rands}. This allows us to more precisely estimate the associated error. We also employ a technique of Arguin, Belius, Bourgade, Radziwi\l\l , and Soundararajan in \cite{maxzeta} to compute \(\lfloor\log\log T\rfloor\) moments for \(\mathcal{P}(s)\) in \cref{lma:largevaluesofP}, since this gives us the moments of a linear combination of \(\mathcal{P}(s)\). This allows us to calculate the rate of convergence in a multivariate context. The moments of a linear combination also provide a more direct proof of the real case in \cite{mesoscopic} using the Cram\'er-Wald device.

Improving upon the rate of convergence in \cite{selbergamalfi} using our techniques is difficult, since it requires taking \(W=o((\log\log\log T)^2)\), which is not large enough to handle our parameter \(X\) when mollifying \(\zeta\) in \cref{lma:zetaMestimate} (adjusting \(X\) to handle \(W\) is essentially equivalently difficult, as this results in the mollifer becoming too large). Selberg also obtains a rate of \(\log\log\log T/\sqrt{\log\log T}\) for the imaginary part of \(\log\zeta\), and an explicit proof of this is given in \cite{wahl} in the context of mod-gaussian convergence (more information is given in Kowalski and Nikeghbali \cite{modgaussian}). In addition, there is numerical data by Keating and Snaith \cite{keatingsnaith} that corroborates that the current rate of convergence for the imaginary part of zeta is better than that of the real part. Our techniques in and of themselves are ill-suited to obtain this result, again due to the difficulty in mollifying the imaginary part of \(\zeta\) in \cref{prop:mollifying}.

In random matrix theory, Bourgade, Hughes, Nikeghbali, and Yor show in \cite{randommatrix} that \[\left|\mathbb{P}\left(\frac{\log|\det(I-A)|}{\sqrt{\frac{1}{2}\log N}}\leq x\right)-\frac{1}{\sqrt{2\pi}}\int_{-\infty}^x e^{-t^2/2}\,dt\right|\leq\frac{C}{(\log N)^{3/2}(1+|x|)^3}\] where \(\det(I-A)\) is the characteristic polynomial of an \(N\times N\) matrix sampled under the Haar measure on the unitary group \(U(N)\), and \(C\) is a constant. This result might suggest a rate of convergence for Selberg's central limit theorem of \(1/(\log\log T)^{3/2}\). However, it seems from \cref{prop:comparingmoments} that it is unlikely for one to be able to obtain a better rate than \(1/\sqrt{\log\log T}\).

\subsection{Structure of the Proofs} All proofs are given in \cref{proofs}. The proof of \cref{thm:selbergcltrate} follows by seven successive propositions. The distances between the random variables are explicit at each step of the proof. The trickiest propositions to handle are \cref{prop:mollifying,prop:comparingmoments}, mollifying \(\zeta\) and comparing moments via Fourier transforms. However, the steps that incur the greatest errors are \cref{prop:movingoffaxis,prop:truncatingprimesum}, moving off axis and truncating the prime sum. Although these steps are proven in a straightforward fashion, it is difficult to improve them due to the impact of any modifications on \cref{prop:mollifying,prop:comparingmoments}. That is, using a longer mollifier would allow us to get closer to the critical line in \cref{prop:movingoffaxis}, but would consequently incur too great of an error when mollifying \(\zeta\) in \cref{prop:mollifying}. In addition, using a longer prime sum would improve the error in \cref{prop:truncatingprimesum}, but then the error in comparing Fourier transforms in \cref{prop:comparingmoments} would be too large.\\

Let \(\tau\) be a random point distributed uniformly on \([T,2T]\) and set \begin{equation}
\sigma_0=\frac{1}{2}+\frac{W}{\log T},\quad s_0=\sigma_0+i(\tau+h),\text{ and }s_0'=\sigma_0+i(\tau+h').\label{eq:sigma0}
\end{equation} It is necessary to take \(W=o(\sqrt{\log\log T})\), in order to typically approximate \(\log|\zeta(\frac{1}{2}+i(\tau+h))|\) and \(\log|\zeta(\frac{1}{2}+i(\tau+h'))|\) by \(\log|\zeta(s_0)|\) and \(\log|\zeta(s_0')|\), respectively, as in Proposition 1 of \cite{rands}. There is some flexibility in the choice of \(W\), but as we will see in \cref{prop:movingoffaxis} below, \(W\) is the primary contributor to the overall rate of convergence that we obtain, so we take \begin{equation}
W=K(\log\log\log T)^2,\label{eq:W}
\end{equation} where \(K\) is a large constant. To simplify notation, let \[\mathfrak{s}=\sqrt{\tfrac{1}{2}\log\log T}\] denote the standard deviation appearing in \cref{thm:selbergcltrate}. The contribution towards the rate of convergence in \cref{thm:selbergcltrate} by moving off axis is then given by
\begin{proposition}[Moving off axis]\label{prop:movingoffaxis}
With the notation above, we have
\[d_{\mathcal{D}}(\mathbf{V},\mathbf{W})\ll\frac{(\log\log\log T)^2}{\sqrt{\log\log T}},\] where \(\mathbf{V}=(\mathcal{V},\mathcal{V}')\), \(\mathbf{W}=(\mathcal{W},\mathcal{W}')\) are intermediary variables defined by \begin{align*}
\mathcal{V}&=\frac{\log|\zeta(\tfrac{1}{2}+i(\tau+h))|}{\mathfrak{s}} & \mathcal{W}&=\frac{\log|\zeta(s_0)|}{\mathfrak{s}}\\
\mathcal{V}'&=\frac{\log|\zeta(\tfrac{1}{2}+i(\tau+h'))|}{\mathfrak{s}} & \mathcal{W}'&=\frac{\log|\zeta(s_0')|}{\mathfrak{s}}.
\end{align*}
\end{proposition}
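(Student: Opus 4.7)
The plan is to exploit the Lipschitz property of Dudley test functions to reduce the problem to an $L^1$ estimate for the difference $\log|\zeta(\tfrac12 + i(\tau+h))| - \log|\zeta(\sigma_0 + i(\tau+h))|$, and then to control this $L^1$ norm by the classical Hadamard factorization over zeros of $\zeta$. Specifically, for any $f \in \mathcal{L}$ the Lipschitz bound gives
\[
|f(\mathbf{V}) - f(\mathbf{W})| \le \|\mathbf{V}-\mathbf{W}\|_2 \le |\mathcal{V}-\mathcal{W}| + |\mathcal{V}'-\mathcal{W}'|,
\]
so taking expectations and a supremum yields
\[
d_{\mathcal{D}}(\mathbf{V},\mathbf{W}) \le \mathbb{E}|\mathcal{V}-\mathcal{W}| + \mathbb{E}|\mathcal{V}'-\mathcal{W}'|.
\]
Since $|h|,|h'| \ll (\log T)^{-\alpha}$, a change of variable (losing a negligible boundary piece) reduces each expectation to $\tfrac{1}{\mathfrak{s} T}\int_T^{2T}\!\bigl|\log|\zeta(\tfrac12+it)|-\log|\zeta(\sigma_0+it)|\bigr|\,dt$.

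To estimate this integral I would use the standard Hadamard expansion
\[
\log|\zeta(\tfrac12+it)| - \log|\zeta(\sigma_0+it)| = \sum_{\rho=\beta+i\gamma,\,|t-\gamma|\le 1}\log\!\left|\frac{\tfrac12+it-\rho}{\sigma_0+it-\rho}\right| + O(1),
\]
obtained by integrating the partial-fraction expansion of $\zeta'/\zeta$ from $\tfrac12$ to $\sigma_0$. For each nontrivial zero $\rho$, the integrand is bounded by $\tfrac12\log\!\bigl(1+(W/\log T)^2/(t-\gamma)^2\bigr)$, whose integral over $t\in\mathbb{R}$ equals $\pi\,W/\log T$. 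Summing over the $\asymp T\log T$ zeros with $\gamma \in [T-1,2T+1]$ via the Riemann--von Mangoldt formula gives
\[
\int_T^{2T}\!\bigl|\log|\zeta(\tfrac12+it)| - \log|\zeta(\sigma_0+it)|\bigr|\,dt \ \ll\ WT.
\]
Dividing by $\mathfrak{s} T$ and substituting $W = K(\log\log\log T)^2$ from \eqref{eq:W} produces
\[
\mathbb{E}|\mathcal{V}-\mathcal{W}| \ \ll\ \frac{W}{\mathfrak{s}}\ =\ \frac{K(\log\log\log T)^2}{\sqrt{\tfrac12\log\log T}}\ \ll\ \frac{(\log\log\log T)^2}{\sqrt{\log\log T}},
\]
and the same bound holds for $\mathbb{E}|\mathcal{V}'-\mathcal{W}'|$, giving the proposition.

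The main obstacle is justifying the Hadamard-type reduction unconditionally. One needs to truncate the zero sum at $|t-\gamma|\le 1$ and show that the remaining zeros contribute only $O(1)$ per $t$ (each distant term is $\ll W^2/\log^2 T$, and there are $\ll \log T$ per unit window, so the total remainder is $\ll W^2/\log T=o(1)$). Off-critical-line zeros, if any, must be absorbed using a standard zero-density estimate such as the Selberg or the Jutila/Huxley bounds, and one verifies their cumulative contribution is $o(WT)$. Finally, the points $t$ where $\zeta(\tfrac12+it)=0$ form a set of measure zero and are harmless for the $L^1$ integrand. Observe that this proposition is the dominant term in the final rate of convergence, which is precisely why the choice $W=K(\log\log\log T)^2$ in \eqref{eq:W} is optimal for the argument.
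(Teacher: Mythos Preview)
Your approach is essentially the same as the paper's: reduce via the Lipschitz property to $\mathbb{E}|\mathcal{V}-\mathcal{W}|+\mathbb{E}|\mathcal{V}'-\mathcal{W}'|$, then invoke the $L^1$ estimate $\mathbb{E}\bigl|\log|\zeta(\tfrac12+i\tau)|-\log|\zeta(\sigma_0+i\tau)|\bigr|\ll(\sigma_0-\tfrac12)\log T=W$, which the paper quotes as a black box (Proposition~1 of \cite{rands}) and which you correctly sketch via Hadamard factorization and Riemann--von~Mangoldt. One small correction: you do not need zero-density input for off-line zeros---the functional equation pairs any zero $\beta+i\gamma$ with $1-\beta+i\gamma$, and the combined contribution of such a pair is handled by the same elementary integral bound, so the argument in \cite{rands} is fully unconditional.
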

We now take \[X=T^{1/(K'\log\log\log T)}\text{ and }Y=T^{1/(K'\log\log T)}\] where \(K'\) is a large constant such that \(2<K'<K\) (for the \(K\) in equation \eqref{eq:W}), and a mollifier \[M(s)=\sum_n\frac{\mu(n)a(n)}{n^s}\] with \[a(n)=\begin{cases}1 & \text{if }p\mid n\Rightarrow p\leq X,\\
& \text{and }\Omega(n)\leq 100\log\log T\text{ for }p\leq Y,\\
& \text{and }\Omega(n)\leq 100\log\log\log T\text{ for }Y\leq p\leq X,\\
0 & \text{otherwise},\end{cases}\] where \(p\) is prime and \(\Omega(n)\) is the number of prime factors of \(n\) (with multiplicity). The constant \(K'\) is needed in the definitions of the parameters \(X\) and \(Y\) to control the size of the mollifier, and this constant is what necessitates the insertion of a larger constant in our choice of \(W\). The contribution towards the rate of convergence in \cref{thm:selbergcltrate} by mollifying is then given by
\begin{proposition}[Mollifying]\label{prop:mollifying}
With the notation above, we have \[d_{\mathcal{D}}(\mathbf{W},\mathbf{X})\ll\frac{1}{\sqrt{\log\log T}},\]
where \(\mathbf{W}=(\mathcal{W},\mathcal{W}')\) is defined as in \cref{prop:movingoffaxis} and \(\mathbf{X}=(\mathcal{X},\mathcal{X}')\) is the intermediary variable given by \[\mathcal{X}=\frac{\log|M^{-1}(s_0)|}{\mathfrak{s}} \qquad\text{and}\qquad \mathcal{X}'=\frac{\log|M^{-1}(s_0')|}{\mathfrak{s}}.\]
\end{proposition}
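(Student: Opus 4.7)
The plan is to exploit both features of the Dudley metric: Lipschitz continuity on a good event where $\zeta M$ is close to $1$, and uniform boundedness on its complement. Since
\[
\mathbf{W}-\mathbf{X}=\frac{1}{\mathfrak{s}}\bigl(\log|\zeta(s_0)M(s_0)|,\,\log|\zeta(s_0')M(s_0')|\bigr),
\]
the problem reduces to controlling $\log|\zeta(s)M(s)|$ at $s\in\{s_0,s_0'\}$.

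I would introduce the exceptional event $\mathcal{E}=\{|\zeta(s_0)M(s_0)-1|>\tfrac12\}\cup\{|\zeta(s_0')M(s_0')-1|>\tfrac12\}$. On $\mathcal{E}^c$ the elementary inequality $|\log|z||\le 2|z-1|$ for $|z-1|\le\tfrac12$ gives
\[
\|\mathbf{W}-\mathbf{X}\|_2 \ll \frac{1}{\mathfrak{s}}\bigl(|\zeta(s_0)M(s_0)-1|+|\zeta(s_0')M(s_0')-1|\bigr).
\]
Combining the Lipschitz bound on $\mathcal{E}^c$ with the $\|f\|_\infty\leq 1$ bound on $\mathcal{E}$, then applying Cauchy--Schwarz on the first piece and Chebyshev on the second, reduces the whole estimate to the twisted second moment bound
\[
\mathbb{E}\bigl[|\zeta(s_0)M(s_0)-1|^2\bigr]\ll(\log T)^{-c}
\]
for some $c>0$, and its analogue at $s_0'$. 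Any such bound actually yields an error of order $(\log T)^{-c/2}/\sqrt{\log\log T}$, so the rate stated in the proposition is dictated not by this step but by weaker errors appearing elsewhere in the chain.

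To prove the twisted second moment I would follow the blueprint of Proposition 2 of \cite{rands}. Expand $\zeta(\sigma_0+it)M(\sigma_0+it)-1$ as a Dirichlet polynomial $\sum_n c(n) n^{-\sigma_0-it}$ with $c(n)=\sum_{d\mid n}\mu(d)a(d)$, truncated by an approximate functional equation whose tail is negligible at distance $W/\log T$ from the critical line. The mean value theorem for Dirichlet polynomials then converts the integral over $[T,2T]$ into a diagonal sum $\sum_n |c(n)|^2 n^{-2\sigma_0}$, and the support restrictions built into $a(n)$ force $c(n)=0$ unless $n$ has a prime factor exceeding $X$ or anomalously many prime factors at one of the scales $Y$ or $X$. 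Summing the survivors with the explicit value of $\sigma_0-\tfrac12=W/\log T$ produces the required decay.

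The main obstacle is not the moment calculation itself but the parameter balance. Shrinking $W$ would pull $\sigma_0$ too close to $\tfrac12$ for the mean-value argument to absorb the effective length of $M$, while enlarging $X$ or $Y$ would inflate $M$ enough to break the comparison $M^{-1}\approx e^{\mathcal{P}}$ needed in \cref{prop:approximatingthemollifier}. This tension pins down the choices $W=K(\log\log\log T)^2$, $X=T^{1/(K'\log\log\log T)}$, $Y=T^{1/(K'\log\log T)}$, and explains why this proposition---despite being quantitatively much stronger than needed---contributes only $1/\sqrt{\log\log T}$ to the overall error budget.
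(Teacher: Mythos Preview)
Your good/bad event decomposition using the Lipschitz bound on $\mathcal{E}^c$ and the uniform bound on $\mathcal{E}$ is exactly how the paper proceeds, and the reduction to the second moment $\mathbb{E}[|\zeta(s_0)M(s_0)-1|^2]$ is correct. Two points, however, deserve attention.

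First, the quantitative claim $\mathbb{E}[|\zeta(s_0)M(s_0)-1|^2]\ll(\log T)^{-c}$ is too strong with the present choice of parameters. The dominant contribution to the second moment comes from integers with a prime factor exceeding $X$, and the relevant Euler product $\prod_{p>X}(1-p^{-2\sigma_0})^{-1}$ is governed by $X^{-2W/\log T}/((2\sigma_0-1)\log X)$. With $X=T^{1/(K'\log\log\log T)}$ and $W=K(\log\log\log T)^2$ this equals $\exp(-(K/K')\log\log\log T)\asymp(\log\log T)^{-K/K'}$, not a negative power of $\log T$. This is still enough for the proposition because $K>K'$, but your stated rate and the sentence ``yields an error of order $(\log T)^{-c/2}/\sqrt{\log\log T}$'' overstate what this step actually delivers.

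Second, the route you sketch for the second moment---expand $\zeta M-1$ as a single Dirichlet polynomial and apply the mean value theorem for Dirichlet polynomials---is not what the paper (or \cite{rands}) does, and it runs into a length obstacle. After truncating $\zeta$ at $T$, the product $\zeta M$ has length $T\cdot\operatorname{len}(M)$ with $\operatorname{len}(M)$ as large as $Y^{100\log\log T}X^{100\log\log\log T}=T^{200/K'}$, so the off-diagonal error $O(\sum_n |c(n)|^2 n^{1-2\sigma_0})$ in Montgomery--Vaughan is not controlled by the diagonal. The paper instead expands $|\zeta M|^2=\sum_{h,k}\mu(h)a(h)\mu(k)a(k)(hk)^{-\sigma_0}(k/h)^{it}|\zeta|^2$ and invokes Selberg's asymptotic for $\int_T^{2T}(h/k)^{it}|\zeta(\sigma_0+it)|^2\,dt$ (Lemma~6 of \cite{selberg}, stated here as \cref{lma:selbergintegral}); this absorbs the long $\zeta$-factor analytically and reduces the problem to a finite sum over $h,k\le\operatorname{len}(M)$, which is then factored multiplicatively and evaluated via Rankin's trick and the prime number theorem. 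Your diagonal heuristic for the survivors is correct, but you need Selberg's formula (or an equivalent treatment of the off-diagonals exploiting the $\zeta$-structure) rather than the bare mean value theorem to justify it. The relevant reference in \cite{rands} is Proposition~4, not Proposition~2.
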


We approximate the mollifier \(M(s)\) with the sum \[\mathcal{P}(s)=\sum_{2\leq n\leq X}\frac{\Lambda(n)}{n^s\log n},\] where \(\Lambda(n)\) is the von Mangoldt function given by \[\Lambda(n)=\begin{cases}\log p & \text{if }n=p^k,\\ 0 & \text{otherwise}.\end{cases}\] Then the contribution towards the rate of convergence in \cref{thm:selbergcltrate} by using this approximation is
\begin{proposition}[Approximating the mollifier]\label{prop:approximatingthemollifier}
With the notation above, we have
\[d_{\mathcal{D}}(\mathbf{X},\mathbf{Y})\ll(\log\log T)^{-80},\]
where \(\mathbf{X}=(\mathcal{X},\mathcal{X}')\) is defined as in \cref{prop:mollifying} and \(\mathbf{Y}=(\mathcal{Y},\mathcal{Y}')\) is the intermediary variable given by \[\mathcal{Y}=\frac{\re \mathcal{P}(s_0)}{\mathfrak{s}}\qquad\text{and}\qquad\mathcal{Y}'=\frac{\re \mathcal{P}(s_0')}{\mathfrak{s}}.\]
\end{proposition}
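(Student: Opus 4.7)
My plan is to reduce the comparison to controlling $\log\bigl(M(s_0)e^{\mathcal{P}(s_0)}\bigr)$, since
\[
\mathcal{X}-\mathcal{Y} = -\frac{1}{\mathfrak{s}}\,\re\,\log\bigl(M(s_0)e^{\mathcal{P}(s_0)}\bigr),
\]
and similarly for $\mathcal{X}'-\mathcal{Y}'$. Factoring,
\[
M(s_0)e^{\mathcal{P}(s_0)}-1 \;=\; e^{\mathcal{P}(s_0)}\bigl(M(s_0)-e^{-\mathcal{P}(s_0)}\bigr),
\]
so the problem splits into a mean-square bound on $M(s_0)-e^{-\mathcal{P}(s_0)}$ and a tail bound on $|\mathcal{P}(s_0)|$.

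First I would expand both $M(s_0)$ and $e^{-\mathcal{P}(s_0)}$ as Dirichlet series in $s_0=\sigma_0+i(\tau+h)$ and use the near-orthogonality of $\{n^{-i\tau}\}$ on $[T,2T]$ to reduce $\mathbb{E}_\tau\bigl[|M(s_0)-e^{-\mathcal{P}(s_0)}|^2\bigr]$ to a diagonal sum. The key observation is that the Dirichlet coefficients of $M(s)$ and of $e^{-\mathcal{P}(s)}$ agree on every squarefree integer $n$ whose $\Omega$-profile satisfies the thresholds defining $a(n)$; the discrepancy is supported on integers violating one of the $\Omega$-truncations in $M$ or arising in the Taylor expansion of $e^{-\mathcal{P}}$ from primes appearing with total exponent exceeding what $p^k\leq X$ allows. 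The thresholds $100\log\log T$ and $100\log\log\log T$ in the definition of $a(n)$ are chosen so that a Rankin-type argument bounds the resulting diagonal sum by a negative power of $T$, which is much stronger than what is needed. For the tail estimate, \cref{lma:largevaluesofP} applied with moments of order $\lfloor\log\log T\rfloor$ gives $\mathbb{P}(|\mathcal{P}(s_0)|>A)\ll(\log\log T)^{-100}$ for $A$ a slowly growing function of $\log\log T$.

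On the good event $\mathcal{G}=\{|\mathcal{P}(s_0)|,|\mathcal{P}(s_0')|\leq A\}$ one has $|e^{\mathcal{P}(s_0)}|\leq e^A$, so combining the mean-square estimate with Chebyshev's inequality forces $|M(s_0)e^{\mathcal{P}(s_0)}-1|<1/2$ outside a further event of negligible probability, and then $|\log(M(s_0)e^{\mathcal{P}(s_0)})|\leq 2|M(s_0)e^{\mathcal{P}(s_0)}-1|$. For any $f\in\mathcal{L}$ the Dudley inequality gives
\[
|\mathbb{E}[f(\mathbf{X})]-\mathbb{E}[f(\mathbf{Y})]| \;\leq\; 2\,\mathbb{P}(\mathcal{G}^c) \;+\; \mathbb{E}\bigl[\|\mathbf{X}-\mathbf{Y}\|_2\,\mathbf{1}_{\mathcal{G}}\bigr],
\]
and both pieces are comfortably $\ll(\log\log T)^{-80}$ once the $1/\mathfrak{s}$ prefactor in $\mathcal{X}-\mathcal{Y}$ is used.

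The main obstacle will be the bookkeeping for the mean-square estimate of $M(s_0)-e^{-\mathcal{P}(s_0)}$: although most Dirichlet coefficients cancel exactly, one must carefully enumerate the integers at which $M$ and $e^{-\mathcal{P}}$ differ and verify that the $\Omega$-thresholds beat the combinatorial growth of $(\mathcal{P}(s))^k/k!$. Since $(\log\log T)^{-80}$ is a much stronger decay than what is strictly required for the final theorem, a loose bound here would already be fatal, which is precisely why the direct comparison $e^{-\mathcal{P}}-M$, rather than the intermediate Dirichlet series $\mathcal{M}$ used in \cite{rands}, is essential.
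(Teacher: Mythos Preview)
Your overall architecture---splitting into a good event where $\mathcal{P}(s_0)$ is moderate and a tail event, and on the good event controlling $M(s_0)e^{\mathcal{P}(s_0)}-1=e^{\mathcal{P}(s_0)}\bigl(M(s_0)-e^{-\mathcal{P}(s_0)}\bigr)$---matches the paper's. The gap is in how you propose to bound $M(s_0)-e^{-\mathcal{P}(s_0)}$.

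You want to compute $\mathbb{E}_\tau\bigl[|M(s_0)-e^{-\mathcal{P}(s_0)}|^2\bigr]$ by near-orthogonality of $n^{-i\tau}$, but $e^{-\mathcal{P}(s)}$ is not a short Dirichlet polynomial: its Dirichlet expansion is (up to the prime-power issue) $\sum_{n\ X\text{-smooth}}\mu(n)n^{-s}$, supported on all $X$-smooth squarefree integers with no upper bound $\ll T$ on $n$. The Montgomery--Vaughan mean-value theorem $\frac{1}{T}\int_T^{2T}\bigl|\sum_{n\leq N}a_n n^{-it}\bigr|^2\,dt=(1+O(N/T))\sum|a_n|^2$ gives nothing here since $N/T$ is not small. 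Truncating the Taylor series of $e^{-\mathcal{P}}$ at $k$ terms produces length $X^k$; matching the $\Omega$-threshold in $M$ forces $k\sim100\log\log T$, hence length $T^{100\log\log T/(K'\log\log\log T)}\gg T$, and even after splitting $\mathcal{P}=\mathcal{P}_1+\mathcal{P}_2$ one gets length $T^{200/K'}$ while the paper only assumes $K'>2$. Your ``negative power of $T$'' is a bound on the diagonal sum alone and cannot be upgraded to the mean square.

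The paper sidesteps this by never taking a mean square of $M-e^{-\mathcal{P}}$. Instead it obtains a \emph{pointwise} bound on the good event $\{|P_j(s)|,|\mathcal{P}_j(s)|\text{ small},\ j=1,2\}$: writing $e^{-\mathcal{P}_j(s)}=M_j(s)+\mathcal{E}_j(s)$ with $\mathcal{E}_j$ the tail over $\Omega(n)$ exceeding the threshold, it invokes the Newton-type identity (Lemma~23 of \cite{fyodorov}) expressing $\mathcal{E}_j(s)$ as a series in the power sums $P_j(\ell s)$. On the good event those power sums are controlled, yielding $|\mathcal{E}_1(s)|\ll(\log T)^{-90}$ and $|\mathcal{E}_2(s)|\ll(\log\log T)^{-90}$ deterministically; combined with the lower bound $|M_j(s)|\geq e^{-|\mathcal{P}_j(s)|}-|\mathcal{E}_j(s)|$ this gives $\bigl|\log|M^{-1}(s)|-\re\mathcal{P}(s)\bigr|\ll(\log\log T)^{-80}$ pointwise. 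That combinatorial pointwise input is precisely what your outline is missing, and without it the mean-square route does not close.
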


We then discard the higher order primes in the sum \(\re\mathcal{P}(s)\), electing to instead compare the moments of \[P(s)=\re\sum_{p\leq X}\frac{1}{p^s}\] with those of a Gaussian random variable. The higher order primes in \(\mathcal{P}(s)\) have a contribution towards the rate of convergence in \cref{thm:selbergcltrate} given by
\begin{proposition}[Discarding higher order primes]\label{prop:discardingprimes}
With the notation above, we have
\[d_{\mathcal{D}}\left[\mathbf{Y},\left(\frac{P(s_0)}{\mathfrak{s}},\frac{P(s_0')}{\mathfrak{s}}\right)\right]\ll\frac{1}{\sqrt{\log\log T}},\] where \(\mathbf{Y}=(\mathcal{Y},\mathcal{Y}')\) is the intermediary variable defined in \cref{prop:approximatingthemollifier}.
\end{proposition}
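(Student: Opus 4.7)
The plan is to reduce the Dudley distance to an expected $\ell^2$-distance via the Lipschitz constraint, then show that $\mathbb{E}|\re\mathcal{P}(s_0)-P(s_0)|$ is of constant order. Since every $f\in\mathcal{L}$ satisfies $|f(u)-f(v)|\leq\min(2,\|u-v\|_2)\leq\|u-v\|_2$, taking expectations and a supremum over $f$ yields
\[d_{\mathcal{D}}\left[\mathbf{Y},\left(\tfrac{P(s_0)}{\mathfrak{s}},\tfrac{P(s_0')}{\mathfrak{s}}\right)\right]\;\leq\;\tfrac{1}{\mathfrak{s}}\,\mathbb{E}\bigl[|\re\mathcal{P}(s_0)-P(s_0)|+|\re\mathcal{P}(s_0')-P(s_0')|\bigr].\]
Since $1/\mathfrak{s}\ll 1/\sqrt{\log\log T}$, it suffices to bound each expectation on the right by an absolute constant, and by symmetry in $h,h'$ I only treat $s_0$.

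Using $\Lambda(p^k)/\log(p^k)=1/k$, the primes ($k=1$) in $\mathcal{P}$ reproduce $P(s)$ exactly, leaving
\[\re\mathcal{P}(s)-P(s)\;=\;\re\sum_{k\geq 2}\frac{1}{k}\sum_{p^k\leq X}\frac{1}{p^{ks}}.\]
For $k\geq 3$ the trivial pointwise bound $p^{-k\sigma_0}\leq p^{-3/2}$ (valid since $\sigma_0>1/2$) gives a contribution $\sum_{k\geq 3}\tfrac{1}{k}\sum_p p^{-3/2}=O(1)$, uniformly in $\tau$. The delicate case is $k=2$: a purely deterministic estimate gives only $\sum_{p\leq\sqrt X}p^{-2\sigma_0}\ll\log\log X\ll\log\log T$, which is too large by a factor of $\sqrt{\log\log T}$ after dividing by $\mathfrak{s}$.

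For the $k=2$ term I plan to extract the needed cancellation from the randomness of $\tau$ via a second moment estimate. Expanding
\[\mathbb{E}\!\left[\bigg(\tfrac{1}{2}\sum_{p\leq\sqrt X}\tfrac{\cos(2(\tau+h)\log p)}{p^{2\sigma_0}}\bigg)^{\!2}\right]\;=\;\tfrac{1}{4}\sum_{p_1,p_2\leq\sqrt X}\tfrac{\mathbb{E}[\cos(2(\tau+h)\log p_1)\cos(2(\tau+h)\log p_2)]}{(p_1 p_2)^{2\sigma_0}}\]
and applying the standard oscillatory-integral bound $\tfrac{1}{T}\int_T^{2T}e^{it(\log p_1\pm\log p_2)}\,dt\ll\tfrac{1}{T|\log p_1\pm\log p_2|}$, the off-diagonal pairs contribute at most $\tfrac{\sqrt X}{T}\bigl(\sum_p p^{-2\sigma_0}\bigr)^{2}=o(1)$, because $|\log(p_1/p_2)|\gg 1/\sqrt X$ for distinct primes up to $\sqrt X$. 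The diagonal $p_1=p_2$ then dominates and gives $O\bigl(\sum_p p^{-4\sigma_0}\bigr)=O(1)$ since $4\sigma_0>2$. Cauchy--Schwarz converts the second moment into the required first moment bound, and combining with the $k\geq 3$ estimate completes the proof.

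The main obstacle is the $k=2$ piece: a deterministic bound fails by exactly $\sqrt{\log\log T}$, so one has to genuinely exploit the oscillation of $e^{-it\log p}$ against the uniform distribution of $\tau$ on $[T,2T]$. All other steps are routine bookkeeping with absolutely convergent prime sums.
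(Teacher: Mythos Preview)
Your approach is essentially identical to the paper's: reduce the Dudley distance via the Lipschitz bound to $\tfrac{1}{\mathfrak s}\mathbb{E}|\re\mathcal{P}(s_0)-P(s_0)|$, dispose of the $k\ge 3$ prime powers as a deterministic $O(1)$ term, and control the $k=2$ piece by a second-moment computation (diagonal $\sum_p p^{-4\sigma_0}=O(1)$, off-diagonal killed by $1/T$) followed by Cauchy--Schwarz. This is exactly what the paper does.

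One small slip to fix: the expression $\sum_{k\geq 3}\tfrac{1}{k}\sum_p p^{-3/2}$ is \emph{not} $O(1)$ as written, because you have replaced $p^{-k\sigma_0}$ by the $k$-independent bound $p^{-3/2}$ and then face the divergent $\sum_{k\ge 3}1/k$ (with the implicit constraint $p^k\le X$ it becomes $O(\log\log X)$, still too large). Keep the $k$ in the exponent and sum the geometric series first, $\sum_{k\ge 3}\tfrac{1}{k}p^{-k\sigma_0}\le \tfrac{1}{3}\sum_{k\ge 3}p^{-k/2}\ll p^{-3/2}$, and then $\sum_p p^{-3/2}=O(1)$; this is how the paper gets the bound.
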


Next we split \(P(s)\), setting \[P_1(s)=\re\sum_{p\leq Y}\frac{1}{p^s} \qquad\text{and}\qquad P_2(s)=\re\sum_{Y\leq p\leq X}\frac{1}{p^s},\] so that \(P(s)=P_1(s)+P_2(s)\). In comparing the Fourier transform of \(P(s)\) to that of a Gaussian vector in \cref{prop:comparingmoments}, we incur an error of order \(1/\log\log T\) for \(P_1(s)\), which is absorbed by our normalization of \(1/\sqrt{\log\log T}\), and an error of \(1/\log\log\log T\) for \(P_2(s)\), which is greater than our normalization. As a result, we use \cref{prop:truncatingprimesum} to work with just \(P_1(s)\) when comparing moments, because the additional error generated by dropping \(P_2(s)\) is smaller than the error from moving off-axis in \cref{prop:movingoffaxis}. At the same time, we adjust our normalization \(\mathfrak{s}\) slightly, in order to make it align with the variance of \(P_1(s)\). The adjusted normalization \(\tilde{\mathfrak{s}}\) helps us match moments more exactly when calculating the difference in truncated moments in \cref{lma:fourierexpectations}, whereas dividing by \(\mathfrak{s}\) when computing the difference in moments would result in an unmanageable error term, even in the single variable case.

\begin{proposition}[Truncating the prime sum]\label{prop:truncatingprimesum}
With the notation above, we have
\[d_{\mathcal{D}}\left[\left(\frac{P(s_0)}{\mathfrak{s}},\frac{P(s_0')}{\mathfrak{s}}\right),\left(\frac{P_1(s_0)}{\tilde{\mathfrak{s}}},\frac{P_1(s_0')}{\tilde{\mathfrak{s}}}\right)\right]\ll\frac{\sqrt{\log\log\log T}}{\sqrt{\log\log T}},\]
where \(\tilde{\mathfrak{s}}^2=\frac{1}{2}\sum_{p\leq Y}1/p\).
\end{proposition}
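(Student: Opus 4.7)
The plan is to bound the Dudley distance by an $L^2$ norm of the vector difference, exploiting that every $f\in\mathcal{L}$ is $1$-Lipschitz: since $|f(X)-f(Y)|\leq\|X-Y\|_2$, Cauchy--Schwarz gives $d_{\mathcal{D}}(X,Y)\leq\sqrt{\mathbb{E}[\|X-Y\|_2^2]}$. I would decompose
\[\frac{P(s_0)}{\mathfrak{s}}-\frac{P_1(s_0)}{\tilde{\mathfrak{s}}}=\frac{P_2(s_0)}{\mathfrak{s}}+\left(\frac{1}{\mathfrak{s}}-\frac{1}{\tilde{\mathfrak{s}}}\right)P_1(s_0),\]
and similarly for $s_0'$, reducing the problem to estimating the variances $\mathbb{E}[P_j(s_0)^2]$ ($j=1,2$) and the gap $\mathfrak{s}-\tilde{\mathfrak{s}}$.

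For the variances I expand the squares and apply the near-orthogonality of prime exponentials on $[T,2T]$: the off-diagonal terms $\frac{1}{T}\int_T^{2T}(p/q)^{\pm it}\,dt=O(1/(T|\log(p/q)|))$ are negligible after summing since $X\leq T^{1/2}$. The diagonal yields $\mathbb{E}[P_j(s_0)^2]=\tfrac{1}{2}\sum_{p\in I_j}p^{-2\sigma_0}+O(1)$ with $I_1=[2,Y]$ and $I_2=(Y,X]$. Using $p^{-2\sigma_0}=p^{-1}(1+o(1))$ uniformly for $p\leq Y$ (because $W\log Y/\log T=o(1)$), this gives $\mathbb{E}[P_1(s_0)^2]=\tilde{\mathfrak{s}}^2+o(1)$; on $I_2$ the crude bound $p^{-2\sigma_0}\leq p^{-1}$ suffices. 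Mertens' theorem, together with $\log Y=\log T/(K'\log\log T)$ and $\log X=\log T/(K'\log\log\log T)$, then gives $\sum_{Y<p\leq X}1/p=\log\log\log T+O(1)$ and $\mathfrak{s}^2-\tilde{\mathfrak{s}}^2=\tfrac{1}{2}\log\log\log T+O(1)$.

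Combining these estimates, the $P_2/\mathfrak{s}$ contribution has $L^2$ norm $\ll\sqrt{\log\log\log T/\log\log T}$, while $|1/\mathfrak{s}-1/\tilde{\mathfrak{s}}|\ll\log\log\log T/(\log\log T)^{3/2}$ makes the renormalization term contribute only $\ll\log\log\log T/\log\log T$ in $L^2$, which is strictly smaller. Summing over the two coordinates and taking square roots gives the claimed rate $\sqrt{\log\log\log T}/\sqrt{\log\log T}$. The proof has no single delicate step; the entire rate is dictated by the Mertens gap between $\log X$ and $\log Y$. The only conceptual point worth flagging is the necessity of using $\tilde{\mathfrak{s}}$ rather than $\mathfrak{s}$ in the target vector: a naive choice leaves the variance of $P_1(s_0)/\mathfrak{s}$ off by $\log\log\log T/\log\log T$, which, though tolerable at this step, would spoil the Gaussian matching in \cref{prop:comparingmoments}.
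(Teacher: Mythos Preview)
Your proof is correct and follows essentially the same approach as the paper. The paper organizes the argument as a two-step triangle inequality through the intermediate vector $(P_1(s_0)/\mathfrak{s},P_1(s_0')/\mathfrak{s})$, whereas you decompose the coordinate difference directly; both reduce to the same $L^2$ estimates for $P_2/\mathfrak{s}$ and for the renormalization term, with the dominant rate coming from $\mathbb{E}[P_2^2]\asymp\log\log\log T$ via Mertens. Your bound $|1/\mathfrak{s}-1/\tilde{\mathfrak{s}}|\ll\log\log\log T/(\log\log T)^{3/2}$ is in fact sharper than the paper's cruder $\tilde{\mathfrak{s}}-\mathfrak{s}=O(1)$, but this does not affect the final rate.
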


The contribution towards the rate of convergence in \cref{thm:selbergcltrate} by approximating a normal distribution with \(P_1(s)\) is
\begin{proposition}[Comparing moments]\label{prop:comparingmoments}
With the notation above, we have
\[d_{\mathcal{D}}\left[\left(\frac{P_1(s_0)}{\tilde{\mathfrak{s}}},\frac{P_1(s_0')}{\tilde{\mathfrak{s}}}\right),\mathbf{\tilde{Z}}\right]\ll\frac{1}{\sqrt{\log\log T}},\] where \(\mathbf{\tilde{Z}}=(\mathcal{\tilde{Z}},\mathcal{\tilde{Z}}')\) is a Gaussian vector with mean \(0\) and the same covariance matrix as \((P_1(s_0)/\tilde{\mathfrak{s}},P_1(s_0')/\tilde{\mathfrak{s}})\).
\end{proposition}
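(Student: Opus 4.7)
The plan is to combine the Cramér--Wold device with a standard Fourier smoothing argument, converting a bound on the difference of characteristic functions into a Dudley bound. Let $\mathbf{P}=(P_1(s_0)/\tilde{\mathfrak{s}},P_1(s_0')/\tilde{\mathfrak{s}})$. Given a test function $f$ with $\|f\|_\infty,\|f\|_{\mathrm{Lip}}\le 1$, convolve $f$ with a narrow bump $\phi_\delta$ of width $\delta\sim 1/\sqrt{\log\log T}$ to obtain $f_\delta$ with $\|f_\delta-f\|_\infty\ll\delta$ and whose Fourier transform $\hat f_\delta$ is effectively supported in $|(u,v)|\lesssim 1/\delta$. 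Fourier inversion then gives
\[\mathbb{E}[f(\mathbf{P})]-\mathbb{E}[f(\tilde{\mathbf Z})]=O(\delta)+\frac{1}{(2\pi)^2}\int \hat f_\delta(u,v)\bigl(\varphi_{\mathbf P}(u,v)-\varphi_{\tilde{\mathbf Z}}(u,v)\bigr)\,du\,dv,\]
reducing the problem to uniformly bounding $|\varphi_{\mathbf P}-\varphi_{\tilde{\mathbf Z}}|$ on $|(u,v)|\lesssim 1/\delta$.

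For this, I consider the linear combination $L=(uP_1(s_0)+vP_1(s_0'))/\tilde{\mathfrak{s}}$ and compute its moments. Writing $P_1(s_0)=\tfrac12\sum_{p\le Y}p^{-\sigma_0}(p^{-i(\tau+h)}+p^{i(\tau+h)})$ and similarly for $s_0'$, the expansion of $L^{2k}$ produces a sum over $2k$-tuples of primes $(p_1,\ldots,p_{2k})\in[2,Y]^{2k}$ weighted by $\pm$ exponentials in $\tau$. Applying the standard orthogonality $T^{-1}\int_T^{2T}x^{i\tau}\,d\tau=\mathbf{1}_{x=1}+O(T^{-1}|\log x|^{-1})$, the diagonal terms (primes pairing up with opposite signs) reconstruct exactly $(2k-1)!!(\operatorname{Var}L)^{k}$, which is the $2k$-th moment of the Gaussian analogue, while the off-diagonal contributes at most $O(Y^{2k}(\log Y)^{C}/T)$. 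With $Y=T^{1/(K'\log\log T)}$, taking $2k\asymp\log\log T$ keeps the off-diagonal super-polynomially small. The choice $\tilde{\mathfrak{s}}^{2}=\tfrac12\sum_{p\le Y}1/p$ is precisely what makes the diagonal match the Gaussian moment exactly at every order $2k$; this is a technique already employed in Lemma \ref{lma:largevaluesofP}.

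From matching moments up to order $2k$, Taylor-expanding $e^{iL}$ and using that all odd moments agree trivially (parity) produces
\[\bigl|\varphi_{\mathbf P}(u,v)-\varphi_{\tilde{\mathbf Z}}(u,v)\bigr|\ll \frac{(|u|+|v|)^{2k+1}}{(2k+1)!}\bigl(\mathbb{E}|L|^{2k+1}+\mathbb{E}|\tilde L_{\mathrm{G}}|^{2k+1}\bigr)+\text{(negligible)},\]
where both moments on the right are $\ll (Ck)^{k}(\operatorname{Var} L)^{k+1/2}$ by the Gaussian-type bound. Using Stirling $(2k)!\ge(2k/e)^{2k}$, the choice $2k\asymp\log\log T$ together with $|(u,v)|\lesssim 1/\delta\sim\sqrt{\log\log T}$ controls this leading term uniformly by $O(1/\sqrt{\log\log T})$. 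Integrating against $\hat f_\delta$ (which has $L^1$-norm $O(1)$ after normalization) and adding the $O(\delta)$ smoothing error produces the claimed bound.

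The main obstacle is the joint optimization of $\delta$, the effective frequency cutoff $1/\delta$, and the moment order $2k$: taking $\delta$ smaller requires matching more moments, but the orthogonality constraint $Y^{2k}\ll T$ caps $2k$ at roughly $\log\log T$; taking $\delta$ larger directly loses the rate. The value $\delta\sim 1/\sqrt{\log\log T}$ is precisely the sweet spot at which the smoothing error and the Fourier-side error are balanced. A secondary subtlety, and the reason for normalizing by $\tilde{\mathfrak{s}}$ rather than $\mathfrak{s}$, is that exact matching of the diagonal moments at each order is required; the discrepancy between $\tilde{\mathfrak{s}}^2$ and $\mathfrak{s}^2$ (which involves the difference between $\sum_{p\le Y}1/p$ and $\log\log T$) would otherwise appear at every order and produce a $1/\log\log\log T$-type error exceeding our target.
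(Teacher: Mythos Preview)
Your overall plan coincides with the paper's: reduce the Dudley bound to a pointwise bound on the difference of characteristic functions via a Fourier smoothing device, then control that difference using the moment computation of Lemma~\ref{lma:Pmoments}. The paper packages the smoothing step as Lemma~\ref{lma:fourierestimate} (with spatial truncation radius $R$ and frequency cutoff $F$), proves a pointwise bound $|\varphi_{\mathbf P}-\varphi_{\tilde{\mathbf Z}}|\ll(\log\log T)^{-2}$ (Lemma~\ref{lma:fourierexpectations}), and finally takes $R=(\log\log T)^{1/4}$, $F\asymp\sqrt{\log\log T}$.

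There is, however, a genuine gap in your moment step. The claim that the diagonal terms ``reconstruct exactly $(2k-1)!!(\operatorname{Var}L)^k$'' is false: among the $2k$-tuples $(p_1,\ldots,p_{2k})$ whose phases cancel there are configurations with repeated primes (the non-squarefree part of the sum over $\Omega(n)=k$ in Lemma~\ref{lma:Pmoments}), and these do \emph{not} assemble into the Gaussian moment. They give an additive correction $O_k\bigl(V(u,v)^{k-2}\bigr)$ to the $2k$-th moment, and after normalization by $\tilde{\mathfrak{s}}^{2k}$ this produces a contribution of order $(\log\log T)^{-2}$ to $|\varphi_{\mathbf P}-\varphi_{\tilde{\mathbf Z}}|$ uniformly on the relevant frequency window. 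This is in fact the \emph{dominant} term in the characteristic-function error; your true off-diagonal $O(Y^{2k}(\log Y)^C/T)$ and the $(2k{+}1)$-st Taylor remainder are both far smaller. Since you invoke exact matching to get the Taylor-remainder bound, the argument as written does not close; you need to carry the repeated-prime error $\sum_{m\le k}\frac{O_m(V^{m-2})}{(2m)!\,\tilde{\mathfrak{s}}^{2m}}$ separately and check it is $\ll(\log\log T)^{-2}$, exactly as the paper does. (Relatedly, odd moments are not zero ``by parity'' but $O(Y^{2k}/T)$, though this really is negligible.)

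A secondary issue is the Fourier-inversion step. Since $f$ is merely bounded and Lipschitz, $\hat f$ is only a tempered distribution and $\hat f_\delta=\hat f\cdot\hat\phi_\delta$ need not have $L^1$-norm $O(1)$; your identity $\mathbb{E}[f_\delta(\mathbf P)]=\int\hat f_\delta\,\varphi_{\mathbf P}$ is not a priori valid. The paper addresses this by first truncating space to $(-R,R)^2$ at the cost of the tail terms $\mu\bigl(((-R,R)^2)^c\bigr)+\nu\bigl(((-R,R)^2)^c\bigr)$ appearing in Lemma~\ref{lma:fourierestimate}; your convolution-only smoothing omits this and should be supplemented with such a truncation.
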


Finally, we move from the normal approximation in \cref{prop:comparingmoments} to the normal distribution stated in \cref{thm:selbergcltrate} by showing that the contribution towards the rate of convergence in \cref{thm:selbergcltrate} by approximating the normal distribution \(\mathbf{Z}\) with \(\mathbf{\tilde{Z}}\) is
\begin{proposition}[Comparing normals]\label{prop:comparingnormals}
With the notation above, we have
\[d_{\mathcal{D}}\left[\mathbf{\tilde{Z}},\mathbf{Z}\right]\ll\frac{1}{\log\log T},\] where \(\mathbf{Z}=(\mathcal{Z},\mathcal{Z}')\) is a Gaussian vector with mean \(0\) and covariance matrix \(C=\begin{pmatrix}
1 & \alpha \\
\alpha & 1
\end{pmatrix}\).
\end{proposition}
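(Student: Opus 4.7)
The strategy is to reduce the comparison of these two centered bivariate Gaussians to a bound on the operator-norm difference of their covariance matrices, and then estimate the covariance of $(P_1(s_0)/\tilde{\mathfrak{s}},P_1(s_0')/\tilde{\mathfrak{s}})$ directly via Mertens-type prime sums. First I would couple $\tilde{\mathbf{Z}}=\tilde{C}^{1/2}\xi$ and $\mathbf{Z}=C^{1/2}\xi$ against a common $\xi\sim N(0,I_2)$, where $\tilde{C}$ denotes the covariance matrix of $(P_1(s_0)/\tilde{\mathfrak{s}},P_1(s_0')/\tilde{\mathfrak{s}})$. For any $f$ with $\|f\|_\infty\leq 1$ and $\|f\|_{\mathrm{Lip}}\leq 1$ this gives
$$|\mathbb{E}[f(\tilde{\mathbf{Z}})-f(\mathbf{Z})]|\leq\mathbb{E}\|(\tilde{C}^{1/2}-C^{1/2})\xi\|_2\ll\|\tilde{C}^{1/2}-C^{1/2}\|_{\mathrm{op}}.$$
Because $\alpha\in(0,1)$ forces the eigenvalues $1\pm\alpha$ of $C$ to stay uniformly bounded away from $0$, the matrix square root is Lipschitz at $C$, so $\|\tilde{C}^{1/2}-C^{1/2}\|_{\mathrm{op}}\ll\|\tilde{C}-C\|_{\mathrm{op}}$, and the proposition reduces to showing $\|\tilde{C}-C\|_{\mathrm{op}}\ll 1/\log\log T$.

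Next I would expand $P_1(s_0)=\sum_{p\leq Y}\cos((\tau+h)\log p)/p^{\sigma_0}$ and apply the product-to-sum identity; the cross terms $p\neq q$ average over $\tau\in[T,2T]$ to $O(T^{-1+o(1)})$ after integrating $\cos(\tau\log(p/q))$, yielding
$$\tilde{\mathfrak{s}}^2\tilde{C}_{11}=\tfrac12\sum_{p\leq Y}p^{-2\sigma_0}+o(1),\qquad\tilde{\mathfrak{s}}^2\tilde{C}_{12}=\tfrac12\sum_{p\leq Y}\cos(\beta\log p)\,p^{-2\sigma_0}+o(1),$$
where $\beta=|h-h'|\sim(\log T)^{-\alpha}$. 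Expanding $p^{-2\sigma_0}=p^{-1}(1+O((W/\log T)\log p))$ with $W=K(\log\log\log T)^2$, and using $\sum_{p\leq Y}(\log p)/p\ll\log Y$, one can replace $p^{-2\sigma_0}$ by $p^{-1}$ in both sums at cost $O(W\log Y/\log T)=O((\log\log\log T)^2/\log\log T)$; after dividing by $\tilde{\mathfrak{s}}^2\asymp\log\log T$ this contributes only $O(1/\log\log T)$ to $\tilde{C}-C$.

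It then remains to evaluate $\sum_{p\leq Y}\cos(\beta\log p)/p$. Partial summation against the prime counting function followed by the substitution $v=\log p$ rewrites this sum as $\int_{\log 2}^{\log Y}\cos(\beta v)/v\,dv+O(1)$, and since $\alpha<1$ gives $1/\beta=(\log T)^\alpha\ll\log Y$, a further substitution $w=\beta v$ turns the integral into $\mathrm{Ci}(\beta\log Y)-\mathrm{Ci}(\beta\log 2)=\log(1/\beta)+O(1)=\alpha\log\log T+O(1)$. Dividing by $\tilde{\mathfrak{s}}^2=\tfrac12\log\log Y+M/2+o(1)$ and matching the sub-leading constants in numerator and denominator through $\log|\zeta(1+i\beta)|$ and the Mertens asymptotic yields $\tilde{C}_{12}=\alpha+O(1/\log\log T)$, with $\tilde{C}_{11},\tilde{C}_{22}=1+O(1/\log\log T)$ following similarly.

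The hard part will be pinning down the $O(1)$ constant in the oscillatory sum $\sum_{p\leq Y}\cos(\beta\log p)/p$ precisely enough to align with the constant in $\sum_{p\leq Y}1/p=\log\log Y+M+o(1)$, since a crude ratio would only give $O(\log\log\log T/\log\log T)$. This is handled by comparing the truncated Euler product $\prod_{p\leq Y}(1-p^{-(1+i\beta)})^{-1}$ with $\zeta(1+i\beta)$ and controlling the oscillatory tail $\sum_{e^{1/\beta}<p\leq Y}\cos(\beta\log p)/p$ by partial summation against the standard Mertens estimate.
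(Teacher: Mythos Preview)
Your coupling argument via $\tilde{\mathbf{Z}}=\tilde{C}^{1/2}\xi$ and $\mathbf{Z}=C^{1/2}\xi$ is a clean and genuinely different route from the paper's. The paper instead writes the Dudley distance as an integral against the difference of Gaussian densities and Taylor-expands $e^{-x^T\tilde{C}^{-1}x/2}$ about $e^{-x^TC^{-1}x/2}$ using $\tilde{C}^{-1}=C^{-1}-C^{-1}\tilde{\mathcal{E}}C^{-1}+O(\tilde{\mathcal{E}}^2)$, then observes that the leftover quadratic-in-$x$ terms are integrable. Both methods reduce to controlling $\|\tilde{C}-C\|$, and your square-root Lipschitz step (valid since $C$ has eigenvalues $1\pm\alpha$ bounded away from $0$) avoids the density bookkeeping entirely. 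Your direct computation of $\tilde C$ via the diagonal/off-diagonal prime sums and the replacement $p^{-2\sigma_0}\to p^{-1}$ is also correct and matches in spirit what the paper packages into its covariance Lemma.

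Your last paragraph, however, is chasing something that cannot be caught. You correctly notice that $2\tilde{\mathfrak{s}}^2=\sum_{p\le Y}1/p=\log\log T-\log\log\log T+O(1)$ (coming from $\log\log Y$), while $\sum_{p\le Y}\cos(\beta\log p)/p=\alpha\log\log T+O(1)$, so the crude ratio gives $\tilde{C}_{12}=\alpha+\alpha\frac{\log\log\log T}{\log\log T}+O(1/\log\log T)$. But no Euler-product comparison with $\zeta(1+i\beta)$ will eliminate that $\alpha\log\log\log T/\log\log T$ term: the $-\log\log\log T$ in the denominator arises purely from the truncation height $Y=T^{1/(K'\log\log T)}$, whereas the cosine sum is insensitive to $Y$ once $Y>e^{1/\beta}$ (the tail beyond $e^{1/\beta}$ is $O(1)$ by the very partial-summation argument you propose). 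You cannot match a growing term against a bounded constant. The paper sidesteps this by simply citing its covariance Lemma and asserting $\tilde{\mathcal{E}}=O(1/\log\log T)$; in fact the honest bound from both your argument and the paper's is $O(\log\log\log T/\log\log T)$, which is already absorbed by the $(\log\log\log T)^2/\sqrt{\log\log T}$ rate in the main theorem. Drop the final paragraph and stop at the crude ratio.
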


\section{Proofs}\label{proofs}

\subsection{Proof of \cref{thm:selbergcltrate}}\begin{proof}
With the notation above, we have, by \crefrange{prop:movingoffaxis}{prop:comparingnormals} and the triangle inequality, \begin{align*}
&d_{\mathcal{D}}\left[\left(\frac{\log|\zeta(\tfrac{1}{2}+i(\tau+h))|}{\sqrt{\frac{1}{2}\log\log T}},\frac{\log|\zeta(\tfrac{1}{2}+i(\tau+h'))|}{\sqrt{\frac{1}{2}\log\log T}}\right),\left(\mathcal{Z},\mathcal{Z}'\right)\right]\\
&\ll d_{\mathcal{D}}(\mathbf{V},\mathbf{W})+d_{\mathcal{D}}(\mathbf{W},\mathbf{X})+d_{\mathcal{D}}(\mathbf{X},\mathbf{Y})+d_{\mathcal{D}}\left[\mathbf{Y},\left(\frac{P(s_0)}{\mathfrak{s}},\frac{P(s_0')}{\mathfrak{s}}\right)\right]\\
&\phantom{\ll}+d_{\mathcal{D}}\left[\left(\frac{P(s_0)}{\mathfrak{s}},\frac{P(s_0')}{\mathfrak{s}}\right),\left(\frac{P_1(s_0)}{\tilde{\mathfrak{s}}},\frac{P_1(s_0')}{\tilde{\mathfrak{s}}}\right)\right]\\
&\phantom{\ll}+d_{\mathcal{D}}\left[\left(\frac{P_1(s_0)}{\tilde{\mathfrak{s}}},\frac{P_1(s_0')}{\tilde{\mathfrak{s}}}\right),\mathbf{\tilde{Z}}\right]+d_{\mathcal{D}}\left[\mathbf{\tilde{Z}},\mathbf{Z}\right]\\
&\ll\frac{(\log\log\log T)^2}{\sqrt{\log\log T}}.
\end{align*}
\end{proof}

\subsection{Proof of \cref{cor:multivariateselbergclt}}\begin{proof}
Once \cref{thm:selbergcltrate} is proved, \cref{cor:multivariateselbergclt} follows immediately by taking \(T\to\infty\) in \cref{thm:selbergcltrate}, since convergence in the Dudley distance implies convergence in distribution. Full details are provided by Dudley in \cite{originaldudley}, where it is proven that the Dudley distance metrizes the weak\(^\ast\) topology.
\end{proof}

\subsection{Proof of \cref{prop:movingoffaxis}} \begin{proof}
We have
\[d_{\mathcal{D}}(\mathbf{V},\mathbf{W})=\sup_{f\in\mathcal{L}}|\mathbb{E}[f(\mathbf{V})]-\mathbb{E}[f(\mathbf{W})]|\leq\sup_{f\in\mathcal{L}}\mathbb{E}[|f(\mathbf{V})-f(\mathbf{W})|].\] Since \(f:\mathbb{R}^2\to\mathbb{R}\) where \(f\in\mathcal{L}\) is Lipschitz with \(\|f\|_{\text{Lip}}\leq 1\), we have \(|f(v)-f(w)|\leq\|f\|_{\text{Lip}}\|v-w\|_2\leq\|v-w\|_1\) with \(\|\cdot\|_1\) denoting the \(L^1\) norm, and thus \begin{equation}
|f(\mathbf{V})-f(\mathbf{W})|\leq\|\mathbf{V}-\mathbf{W}\|_1.\label{eq:lipschitzbound}
\end{equation} Therefore, by definition of the Dudley distance, the above distance is less than or equal to \begin{align*}
\mathbb{E}[\|\mathbf{V}-\mathbf{W}\|_1]&=\mathbb{E}[|\mathcal{V}-\mathcal{W}|+|\mathcal{V}'-\mathcal{W}'|]\\
&=\mathbb{E}[|\mathcal{V}-\mathcal{W}|]+\mathbb{E}[|\mathcal{V}'-\mathcal{W}'|],
\end{align*} by the linearity of expectation. We now introduce Proposition 1 of \cite{rands} as \cref{lma:offaxis}.

\begin{lemma}\label{lma:offaxis}
With the notation above, for \(T\) large enough and any \(\sigma>1/2\), we have \[\mathbb{E}\Big[\Big|\log|\zeta(\tfrac{1}{2}+i\tau)|-\log|\zeta(\sigma+i\tau)|\Big|\Big]\ll(\sigma-\tfrac{1}{2})\log T.\]
\end{lemma}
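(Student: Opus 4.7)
The plan is to express the difference $\log|\zeta(\tfrac12+i\tau)|-\log|\zeta(\sigma+i\tau)|$ as a horizontal integral of $\re(\zeta'/\zeta)$ and then bound its first $\tau$-moment via the logarithmic derivative of the Hadamard product for $\zeta$. For almost every $\tau\in[T,2T]$ the horizontal segment $\{u+i\tau:\tfrac12\leq u\leq\sigma\}$ avoids the zeros of $\zeta$, and on that segment
\[\log|\zeta(\tfrac12+i\tau)|-\log|\zeta(\sigma+i\tau)|=-\int_{1/2}^{\sigma}\re\,\frac{\zeta'}{\zeta}(u+i\tau)\,du.\]
I would then substitute the truncated Hadamard identity
\[\frac{\zeta'}{\zeta}(u+i\tau)=\sum_{|\gamma-\tau|\leq 1}\frac{1}{u+i\tau-\rho}+O(\log T),\]
valid uniformly in $u\in[\tfrac12,\sigma]$ and $\tau\in[T,2T]$, where $\rho=\beta+i\gamma$ ranges over non-trivial zeros of $\zeta$, to reduce the task to bounding the $\tau$-average of a sum of integrals indexed by zeros.

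The central move is to carry out the $u$-integration \emph{before} taking absolute values or averaging in $\tau$. For each zero, the real part of the $u$-integral equals
\[\tfrac12\log\frac{(\sigma-\beta)^2+(\tau-\gamma)^2}{(\tfrac12-\beta)^2+(\tau-\gamma)^2},\]
a quantity of constant sign in $\tau$ whose $L^1(\mathbb{R})$ norm equals $\pi\bigl||\sigma-\beta|-|\tfrac12-\beta|\bigr|\leq\pi(\sigma-\tfrac12)$, via the elementary identity $\int_{\mathbb{R}}\tfrac12\log\bigl((A^2+x^2)/(B^2+x^2)\bigr)\,dx=\pi(|A|-|B|)$ (immediate by differentiation in $A$). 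Thus each non-trivial zero with $\gamma\in[T-1,2T+1]$ contributes $O(\sigma-\tfrac12)$ to the integral of the zero sum over $\tau\in[T,2T]$; the Riemann--von Mangoldt bound gives $O(T\log T)$ such zeros, so after dividing by $T$ the zero-sum contribution to the mean is $\ll(\sigma-\tfrac12)\log T$. The $O(\log T)$ truncation error from the Hadamard identity integrates trivially to $(\sigma-\tfrac12)\,O(\log T)$, matching the desired bound.

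The main obstacle is uniformity in $u$ as $u\downarrow\tfrac12$: individual summands in the Hadamard expansion blow up at zeros close to the segment of integration. Naively inserting absolute values inside the sum over zeros before performing the $\tau$-average produces a Cauchy kernel whose integral grows logarithmically and costs an extra $\log\log T$ factor, destroying the linear dependence on $\sigma-\tfrac12$. The ordering proposed above — integrate in $u$ first, converting the Cauchy singularity into a harmless logarithmic one; then take absolute values; only then average in $\tau$ — is precisely what preserves the $\sigma-\tfrac12$ scaling. A minor technicality, namely that the $\zeta'/\zeta$-representation fails on the measure-zero set of $\tau$ for which the segment meets a zero, is absorbed by a small deformation of the contour.
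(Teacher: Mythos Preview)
Your argument is correct and is essentially the same as the one the paper invokes from Radziwi\l\l--Soundararajan: express $\log|\zeta(\sigma+it)|-\log|\zeta(\tfrac12+it)|$ via the Hadamard factorization as a sum over zeros of $\tfrac12\log\frac{(\sigma-\beta)^2+(t-\gamma)^2}{(1/2-\beta)^2+(t-\gamma)^2}$ (plus an $O((\sigma-\tfrac12)\log T)$ term), then average in $t$ using $\int_{\mathbb{R}}\tfrac12\log\frac{A^2+x^2}{B^2+x^2}\,dx=\pi(|A|-|B|)$ and count the relevant zeros by Riemann--von Mangoldt. Your detour through $\zeta'/\zeta$ and the local partial-fraction formula before integrating in $u$ is a harmless rephrasing of the same computation.
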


\noindent The proof of \cref{lma:offaxis} uses Hadamard's factorization formula for the completed \(\zeta\)-function and is given in \cite{rands}. The zeros of \(\zeta(s)\) are mentioned only when using the Riemann-von Mangoldt formula to evaluate the resulting integral. By \cref{lma:offaxis}, both \(\mathbb{E}[|\mathcal{V}-\mathcal{W}|]\) and \(\mathbb{E}[|\mathcal{V}'-\mathcal{W}'|]\) are \(\ll W/\sqrt{\log\log T}\), and so our result follows.
\end{proof}

\subsection{Proof of \cref{prop:mollifying}} \begin{proof}
Similar to the way we proved \cref{prop:movingoffaxis}, we have
\[d_{\mathcal{D}}(\mathbf{W},\mathbf{X})=\sup_{f\in\mathcal{L}}|\mathbb{E}[f(\mathbf{W})]-\mathbb{E}[f(\mathbf{X})]|\leq\sup_{f\in\mathcal{L}}\mathbb{E}[|f(\mathbf{W})-f(\mathbf{X})|].\] 
We then evaluate the expectation by splitting it into separate cases, so that the last term is
\begin{align}
\ll&\sup_{f\in\mathcal{L}}\left\{\mathbb{E}\left[|f(\mathbf{W})-f(\mathbf{X})|\cdot\mathbbm{1}\!\left(|\zeta(s_0)M(s_0)-1|\leq\frac{1}{2},|\zeta(s_0')M(s_0')-1|\leq\frac{1}{2}\right)\right]\right.\label{eq:wxleq}\\
&+\mathbb{E}\left[|f(\mathbf{W})-f(\mathbf{X})|\cdot\mathbbm{1}\!\left(|\zeta(s_0')M(s_0')-1|>\frac{1}{2}\right)\right]\label{eq:wxprimegreater}\\
&+\left.\mathbb{E}\left[|f(\mathbf{W})-f(\mathbf{X})|\cdot\mathbbm{1}\!\left(|\zeta(s_0)M(s_0)-1|>\frac{1}{2}\right)\right]\right\}.\label{eq:wxgreater}
\end{align}
As we obtained the inequality \eqref{eq:lipschitzbound} by taking advantage of the Lipschitz nature of \(f\), we similarly obtain \[|f(\mathbf{W})-f(\mathbf{X})|\leq\|\mathbf{W}-\mathbf{X}\|_1.\] Therefore, the expectation in \eqref{eq:wxleq} is less than or equal to \begin{align*}
&\mathbb{E}\left[\|\mathbf{W}-\mathbf{X}\|_1\cdot\mathbbm{1}\!\left(|\zeta(s_0)M(s_0)-1|\leq\frac{1}{2},|\zeta(s_0')M(s_0')-1|\leq\frac{1}{2}\right)\right]\\
&\leq\mathbb{E}\left[|\mathcal{W}-\mathcal{X}|\cdot\mathbbm{1}\!\left(|\zeta(s_0)M(s_0)-1|\leq\frac{1}{2}\right)+|\mathcal{W}'-\mathcal{X}'|\cdot\mathbbm{1}\!\left(|\zeta(s_0')M(s_0')-1|\leq\frac{1}{2}\right)\right]\\
&=\frac{1}{\mathfrak{s}}\mathbb{E}\left[|\log|\zeta(s_0)M(s_0)||\cdot\mathbbm{1}\!\left(|\zeta(s_0)M(s_0)-1|\leq\frac{1}{2}\right)\right]\\
&\phantom{\ll}+\frac{1}{\mathfrak{s}}\mathbb{E}\left[|\log|\zeta(s_0')M(s_0')||\cdot\mathbbm{1}\!\left(|\zeta(s_0')M(s_0')-1|\leq\frac{1}{2}\right)\right].
\end{align*}
On the event \(|\zeta(s)M(s)-1|\leq 1/2\) we have \(|\log|\zeta(s)M(s)||\leq\log 2\), and so both summands are \(\ll 1/\sqrt{\log\log T}\).
Since \(f\) is bounded, the expectations in \eqref{eq:wxprimegreater} and \eqref{eq:wxgreater} are less than or equal to \begin{align*}
&\mathbb{P}\left(|\zeta(s_0)M(s_0)-1|>\frac{1}{2}\right)+\mathbb{P}\left(|\zeta(s_0')M(s_0')-1|>\frac{1}{2}\right)\\
&\ll\mathbb{E}\left[|\zeta(s_0)M(s_0)-1|^2\right]+\mathbb{E}\left[|\zeta(s_0')M(s_0')-1|^2\right].
\end{align*}
We now introduce an analog of Proposition 4 of \cite{rands} as \cref{lma:zetaMestimate}.

\begin{lemma}\label{lma:zetaMestimate}
With the notation above, we have
\[\mathbb{E}[|\zeta(\sigma_0+i\tau)M(\sigma_0+i\tau)-1|^2]\ll(\log\log T)^{-K/K'}.\]
\end{lemma}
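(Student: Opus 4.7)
The approach is to expand $\zeta(s)M(s)-1$ as a Dirichlet series (modulo a controllable error from $\zeta$) and then apply a mean-value estimate for Dirichlet polynomials. Multiplying the two series gives $\zeta(s)M(s) = \sum_n b(n)/n^s$ with $b(n) = \sum_{d\mid n}\mu(d)a(d)$. Clearly $b(1)=1$, and I claim $b(n)=0$ for every $n\geq 2$ satisfying all three defining conditions of $a$: every prime factor $\leq X$; the $\Omega$-count restricted to primes $\leq Y$ at most $100\log\log T$; and restricted to primes in $(Y,X]$ at most $100\log\log\log T$. Indeed, for such $n$ every squarefree divisor $d\mid n$ also meets the three conditions, so $a(d)=1$ and $b(n)=\sum_{d\mid n}\mu(d)=0$. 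Therefore only \emph{bad} integers, which violate at least one condition, contribute to $\zeta(s)M(s)-1$, and $|b(n)|\leq\tau(n)$ uniformly.

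Next I would truncate $\zeta(\sigma_0+it)$ via the approximate functional equation, replacing it by two Dirichlet polynomials of length $T^{1/2+o(1)}$ with negligible mean-square error on the line $\sigma_0 = 1/2+W/\log T$. Since $M$ is supported on integers of size $\leq T^{O(1/\log\log\log T)}$, the product $\zeta(s)M(s)-1$ becomes, up to admissible error, a Dirichlet polynomial of length $T^{1/2+o(1)}$ whose coefficients are supported on bad $n$. The Montgomery--Vaughan mean value theorem then yields
\[
\mathbb{E}\!\left[\big|\zeta(\sigma_0+i\tau)M(\sigma_0+i\tau)-1\big|^2\right]\ll\sum_{n\ \mathrm{bad}}\frac{|b(n)|^2}{n^{2\sigma_0}}+o(1).
\]

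Finally, I would split the sum over bad $n$ into three pieces: (B1) $n$ has a prime factor $>X$; (B2) too many prime factors $\leq Y$; (B3) too many prime factors in $(Y,X]$. For (B1), factoring out one large prime gives a bound $\ll X^{-(2\sigma_0-1)}/\log X$ times a bounded Euler product; substituting $\sigma_0-\tfrac{1}{2}=K(\log\log\log T)^2/\log T$ and $\log X = \log T/(K'\log\log\log T)$ produces $X^{-(2\sigma_0-1)}=(\log\log T)^{-2K/K'}$, already stronger than the target. For (B2) and (B3), Markov's inequality applied to an exponential moment $z^{\Omega(n)}$ (estimated via the Euler product $\prod_p(1+O(z/p))$) converts the excess-count conditions into factors $z^{-100\log\log T}$ and $z^{-100\log\log\log T}$ respectively; any constant $z>1$ then yields a savings $\ll(\log\log T)^{-c}$ for arbitrarily large $c$. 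The main obstacle is calibrating (B1): the balance $K>K'>2$ enforced in the definitions of $W$, $X$, and $Y$ is precisely what allows the large-prime contribution to decay polylogarithmically in $\log\log T$ while keeping $M$ short enough for the mean-value estimate to be effective.
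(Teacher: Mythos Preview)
Your route is genuinely different from the paper's. The paper expands $|1-\zeta M|^2 = |\zeta M|^2 - 2\re(\zeta M) + 1$, computes the cross term directly via the approximate functional equation, and then evaluates $\int_T^{2T}|\zeta M|^2\,dt$ using Selberg's twisted second moment $\int |\zeta(\sigma_0+it)|^2 (h/k)^{it}\,dt$ (stated as \cref{lma:selbergintegral}). After factoring the resulting double sum over $h,k$ multiplicatively and using Rankin's trick to remove the $\Omega$-constraints, the main term collapses to $T\prod_{p>X}(1-p^{-2\sigma_0})^{-1}$, and the excess over $T$ yields the bound. Your idea of writing $\zeta M - 1$ as a Dirichlet polynomial supported on bad $n$ and applying Montgomery--Vaughan directly avoids the Selberg formula altogether, which is attractive; but the sketch has a genuine gap.

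The gap is in (B1). With the crude pointwise bound $|b(n)|\leq\tau(n)$ that you invoke, ``factoring out one large prime'' $p>X$ leaves behind essentially $\sum_m\tau(m)^2 m^{-2\sigma_0}$, and this is \emph{not} a bounded Euler product: on the line $2\sigma_0 = 1 + 2W/\log T$ it is of size $(\log T/W)^4$, which swamps the saving $\sum_{p>X}p^{-2\sigma_0}$. What actually makes (B1) work is a structural fact only implicit in your $b(n)=0$ observation: since $a(d)=0$ whenever $d$ has a prime factor $>X$, the coefficient $b(n)$ depends only on the $X$-smooth part $n_1$ of $n$, and $b(n_1)=0$ for every good $n_1>1$. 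Splitting $n=n_1 n_2$ into $X$-smooth and $X$-rough parts, the (B1) contribution factors as
\[
\Bigl(\prod_{p>X}(1-p^{-2\sigma_0})^{-1}-1\Bigr)\cdot\Bigl(1+\sum_{\substack{n_1>1\\ n_1\ \mathrm{bad}}}|b(n_1)|^2 n_1^{-2\sigma_0}\Bigr),
\]
which is indeed of the required size once (B2) and (B3) are controlled. A secondary point: $M$ is not supported on integers of size $\leq T^{O(1/\log\log\log T)}$ but on integers up to $Y^{100\log\log T}\cdot X^{100\log\log\log T}=T^{200/K'}$; so for Montgomery--Vaughan to be effective (and for the truncated $\zeta M-1$ to really coincide with $\sum_{n\ \mathrm{bad}}b(n)n^{-s}$ on the relevant range of $n$) you need $K'$ considerably larger than the threshold $K'>2$ suggests.
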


\begin{proof}
The proof follows the same procedure as in \cite{rands}, with slight modifications to accommodate for our different choice of \(W\). The main difference is in equation \eqref{eq:zetaproductparts}. The approximate functional equation for \(\zeta(\sigma_0+it)\) in the range \(T\leq t\leq 2T\) (e.g. Theorem 1.8 of \cite{ivic}) given by \(\zeta(\sigma_0+it)=\sum_{n\leq T}n^{-\sigma_0-it}+O(T^{-\frac{1}{2}})\) yields \begin{align*}
\int_T^{2T}\zeta(\sigma_0+it)M(\sigma_0+it)\,dt&=\int_T^{2T}\left[\sum_{n\leq T}\frac{1}{n^{\sigma_0+it}}+O(T^{-\frac{1}{2}})\right]\sum_m\frac{\mu(m)a(m)}{m^{\sigma_0+it}}\,dt\\
&=\sum_{n\leq T}\sum_m\frac{a(m)\mu(m)}{(mn)^{\sigma_0}}\int_T^{2T}(mn)^{-it}\,dt+O(T^{\frac{1}{2}+\frac{1}{K'}})\\
&=T+O(T^{\frac{1}{2}+\frac{1}{K'}}),
\end{align*} since the length of \(M\) being \(X=T^{1/(K'\log\log\log T)}\) implies \(M\ll T^{1/K'}\) and \(K'>2\). Thus we can expand the square to obtain \begin{equation}
\int_T^{2T}|1-\zeta(\sigma_0+it)M(\sigma_0+it)|^2\,dt=\int_T^{2T}|\zeta(\sigma_0+it)M(\sigma_0+it)|^2\,dt-T+O(T^{\frac{1}{2}+\frac{1}{K'}}).\label{eq:zetaMsquared}
\end{equation}
To evaluate this integral, we introduce Lemma 6 of \cite{selberg}.

\begin{lemma}\label{lma:selbergintegral}
Let \(h\) and \(k\) be non-negative integers, with \(h,k\leq T\). Then, for any \(1\geq\sigma>\frac{1}{2}\), \begin{align*}
\int_T^{2T}\left(\frac{h}{k}\right)^{it}|\zeta(\sigma+it)|^2\,dt&=\int_T^{2T}\left(\zeta(2\sigma)\left(\frac{(h,k)^2}{hk}\right)^\sigma+\left(\frac{t}{2\pi}\right)^{1-2\sigma}\zeta(2-2\sigma)\left(\frac{(h,k)^2}{hk}\right)^{1-\sigma}\right)dt\\
&+O(T^{1-\sigma+\epsilon}\min(h,k)).
\end{align*}
\end{lemma}

\noindent This lemma allows us to break down the integral in \eqref{eq:zetaMsquared} into the three summands \begin{align}
&\zeta(2\sigma_0)\left(\int_T^{2T}dt\right)\sum_{h,k}\frac{\mu(h)\mu(k)a(h)a(k)}{(hk)^{2\sigma_0}}(h,k)^{2\sigma_0},\label{eq:zetasummand1}\\
&\zeta(2-2\sigma_0)\left(\int_T^{2T}\left(\frac{t}{2\pi}\right)^{1-2\sigma_0}\,dt\right)\sum_{h,k}\frac{\mu(h)\mu(k)a(h)a(k)}{hk}(h,k)^{2-2\sigma_0},\label{eq:zetasummand2}\\
&O(T^{1-\sigma_0+\epsilon}\min(h,k))\sum_{h,k}\frac{\mu(h)\mu(k)a(h)a(k)}{(hk)^{\sigma_0}}.\label{eq:zetasummand3}
\end{align}
We split \(h\) and \(k\), so that \(h=h_1h_2\) and \(k=k_1k_2\) where \(h_1\) and \(k_1\) are composed of primes below \(Y\), and \(h_2\) and \(k_2\) of primes between \(Y\) and \(X\). We then write \(a(h)=a_1(h_1)a_2(h_2)\) for \begin{align*}
a_1(h_1)&=\begin{cases}1 & \text{if }\Omega(h_1)\leq 100\log\log T\text{ for }p\leq Y,\\
0 & \text{otherwise},\end{cases}\\
a_2(h_2)&=\begin{cases}1 & \text{if }\Omega(h_2)\leq 100\log\log\log T\text{ for }Y\leq p\leq X,\\
0 & \text{otherwise},\end{cases}
\end{align*} where \(\Omega(h)\) is the number of prime factors of \(h\) (with multiplicity), and write \(a(k)=a_1(k_1)a_2(k_2)\) similarly. The first summand \eqref{eq:zetasummand1} then factors as \begin{equation}
T\zeta(2\sigma_0)\left(\sum_{h_1,k_1}\frac{\mu(h_1)\mu(k_1)a_1(h_1)a_1(k_1)}{(h_1k_1)^{2\sigma_0}}(h_1,k_1)^{2\sigma_0}\right)\left(\sum_{h_2,k_2}\frac{\mu(h_2)\mu(k_2)a_2(h_2)a_2(k_2)}{(h_2k_2)^{2\sigma_0}}(h_2,k_2)^{2\sigma_0}\right).\label{eq:zetasummandfactored}
\end{equation}
Due to the multiplicativity of the factors in the sum, the first factor in \eqref{eq:zetasummandfactored} can be expressed as \[\sum_{\substack{h_1,k_1\\p\mid h_1k_1\,\Longrightarrow\, p\leq Y}}\frac{\mu(h_1)\mu(k_1)}{(h_1k_1)^{2\sigma_0}}(h_1,k_1)^{2\sigma_0}=\prod_{p\leq Y}\left(1-\frac{1}{p^{2\sigma_0}}-\frac{1}{p^{2\sigma_0}}+\frac{1}{p^{2\sigma_0}}\right)=\prod_{p\leq Y}\left(1-\frac{1}{p^{2\sigma_0}}\right),\]
together with an error term resulting from the cases where \(h_1\) and \(k_1\) have more than \(100\log\log T\) prime factors, which is \begin{align*}
&\ll\sum_{\substack{h_1,k_1\\p\mid h_1k_1\,\Longrightarrow\, p\leq Y\\ \Omega(h_1)>100\log\log T}}\frac{|\mu(h_1)\mu(k_1)|}{(h_1k_1)^{2\sigma_0}}(h_1,k_1)^{2\sigma_0}\\
&\ll e^{-100\log\log T}\sum_{\substack{h_1,k_1\\p\mid h_1k_1\,\Longrightarrow\, p\leq Y}}\frac{|\mu(h_1)\mu(k_1)|}{(h_1k_1)^{2\sigma_0}}(h_1,k_1)^{2\sigma_0}e^{\Omega(h_1)}
\end{align*}
using Rankin's trick. Here we have used symmetry to assume that \(h_1\) has more than \(100\log\log T\) factors. Then once again, due to multiplicativity, this is \begin{align*}
&\ll(\log T)^{-100}\prod_{p\leq Y}\left(1+\frac{e}{p^{2\sigma_0}}+\frac{1}{p^{2\sigma_0}}+\frac{e}{p^{2\sigma_0}}\right)=(\log T)^{-100}\prod_{p\leq Y}\left(1+\frac{1+2e}{p^{2\sigma_0}}\right)\\
&=(\log T)^{-100}\exp\left[\sum_{p\leq Y}\left(\frac{1+2e}{p^{2\sigma_0}}\right)+O\left(\frac{1}{p^{4\sigma_0}}\right)\right]\\
&\ll(\log T)^{-100}\exp[(1+2e)\log\log Y]\ll(\log T)^{-90}.
\end{align*} Similarly, the second factor in \eqref{eq:zetasummandfactored} is \[\prod_{Y<p\leq X}\left(1-\frac{1}{p^{2\sigma_0}}\right)+O((\log\log T)^{-90}).\] By inserting these factors into \eqref{eq:zetasummandfactored}, the first summand \eqref{eq:zetasummand1} is \[\sim T\zeta(2\sigma_0)\prod_{p\leq X}\left(1-\frac{1}{p^{2\sigma_0}}\right)=T\prod_{p>X}\left(1-\frac{1}{p^{2\sigma_0}}\right)^{-1}.\] We then use the prime number theorem with classical error (e.g. Theorem 6.9 of \cite{montgomery}), which states that the prime-counting function \(\pi(X)\) can be written as \[\pi(X)=\int_2^X\frac{1}{\log y}\,dy+O(xe^{-c\sqrt{\log X}}).\] This allows us to write the product as \[\prod_{p>X}\left(1-\frac{1}{p^{2\sigma_0}}\right)^{-1}=\exp\left(\int_X^\infty\frac{1}{\log t}\frac{1}{t^{2\sigma_0}}\,dt+o(1)\right),\] where the integral can be evaluated using integration by parts to obtain \begin{align}
\left.\frac{t^{1-2\sigma_0}}{(1-2\sigma_0)\log t}\right\rvert_X^\infty+\int_X^\infty\frac{1}{(1-2\sigma_0)t^{2\sigma_0}(\log t)^2}\,dt&\ll\frac{X^{1-2\sigma_0}}{(2\sigma_0-1)\log X}+\frac{X^{1-2\sigma_0}}{(1-2\sigma_0)^2(\log X)^2}\label{eq:zetaproductparts}\\
&\ll \frac{X^{-W/\log T}}{\frac{W}{\log T}\log X}\nonumber\\
&=\frac{\exp[-W/(K'\log\log\log T)]}{W/(K'\log\log\log T)}\nonumber\\
&=\frac{\exp(-K\log\log\log T/K')}{K\log\log\log T/K'}\nonumber\\
&\ll(\log\log T)^{-K/K'}.\nonumber
\end{align}
By similar reasoning, the second summand \eqref{eq:zetasummand2} is \begin{align*}
&\sim\left(\int_T^{2T}\left(\frac{t}{2\pi}\right)^{1-2\sigma_0}\,dt\right)\zeta(2-2\sigma_0)\prod_{p\leq X}\left(1-\frac{2}{p}+\frac{1}{p^{2\sigma_0}}\right)=o(T).
\end{align*}
Thus since the error term \eqref{eq:zetasummand3} can be written as \[O\left(T^{1/2+\epsilon}\sum_{h,k}\frac{1}{(hk)^{\sigma_0}}\min(h,k)\right)=O\left(T^{1/2+\epsilon}\right),\] which is \(o(T)\), the desired expectation estimate results from the contribution by the first summand \eqref{eq:zetasummand1}.
\end{proof}

\noindent Finishing the proof of \cref{prop:mollifying}, observe that by \cref{lma:zetaMestimate}, both \(\mathbb{E}\left[|\zeta(s_0)M(s_0)-1|^2\right]\) and \(\mathbb{E}\left[|\zeta(s_0')M(s_0')-1|^2\right]\) are \(\ll (\log\log T)^{-K/K'}\), and our result follows, since \(K>K'\).
\end{proof}

\subsection{Proof of \cref{prop:approximatingthemollifier}} \begin{proof}
Similar to the way we proved \cref{prop:movingoffaxis} and \cref{prop:mollifying}, we have
\[d_{\mathcal{D}}(\mathbf{X},\mathbf{Y})=\sup_{f\in\mathcal{L}}|\mathbb{E}[f(\mathbf{X})]-\mathbb{E}[f(\mathbf{Y})]|\leq\sup_{f\in\mathcal{L}}\mathbb{E}[|f(\mathbf{X})-f(\mathbf{Y})|].\]
Next set \(P_1(s)=\re\sum_{p\leq Y}(1/p^s)\) and \(P_2(s)=\re\sum_{Y\leq p\leq X}(1/p^s)\), so that \(P(s)=P_1(s)+P_2(s)\). Recall the definitions of \(s_0,s_0'\) given in equation \eqref{eq:sigma0}. We then evaluate the expectation by splitting it into separate cases, so that the last term is
\begin{align}
&\ll\mathbb{E}[\|\mathbf{X}-\mathbf{Y}\|_1\cdot\mathbbm{1}(|P_1(r)|\leq\log\log T,|P_2(r)|\leq\log\log\log T,r\in\{s_0,s_0'\})]\label{eq:xyleq}\\
&\phantom{\ll}+\sum_{r\in\{s_0,s_0'\}}[\mathbb{P}(|P_1(r)|>\log\log T)+\mathbb{P}(|P_2(r)|>\log\log\log T)],\label{eq:xygreater}
\end{align}
where we use the Lipschitz property in the definition of \(\mathcal{L}\) for \eqref{eq:xyleq} and the boundedness property for \eqref{eq:xygreater}.
First, by considering \eqref{eq:xyleq}, we see that \begin{align*}
&\mathbb{E}[\|\mathbf{X}-\mathbf{Y}\|_1\cdot\mathbbm{1}(|P_1(r)|\leq\log\log T,|P_2(r)|\leq\log\log\log T,r\in\{s_0,s_0'\})]\\
&=\mathbb{E}[|\mathcal{X}-\mathcal{Y}|\cdot\mathbbm{1}(|P_1(s_0)|\leq\log\log T,|P_2(s_0)|\leq\log\log\log T)\\
&\phantom{=}+|\mathcal{X}'-\mathcal{Y}'|\cdot\mathbbm{1}(|P_1(s_0')|\leq\log\log T,|P_2(s_0')|\leq\log\log\log T)]\\
&=\frac{1}{\mathfrak{s}}\mathbb{E}[|\log|M^{-1}(s_0)|-\re\mathcal{P}(s_0)|\cdot\mathbbm{1}(|P_1(s_0)|\leq\log\log T,|P_2(s_0)|\leq\log\log\log T)]\\
&\phantom{=}+\frac{1}{\mathfrak{s}}\mathbb{E}[|\log|M^{-1}(s_0')|-\re\mathcal{P}(s_0')|\cdot\mathbbm{1}(|P_1(s_0')|\leq\log\log T,|P_2(s_0')|\leq\log\log\log T)].
\end{align*}
To finish our bound on \eqref{eq:xyleq} we calculate the necessary expectations on the given events with the following lemma, which shows that both summands are \(\ll (\log\log T)^{-80}\). Here we take a different approach than Proposition 3 of \cite{rands} in order to clearly quantify the associated error with this step of the proof of \cref{thm:selbergcltrate}.

\begin{lemma}\label{lma:logM-1-reP}
With notation as above, we have \[|\log|M^{-1}(s)|-\re\mathcal{P}(s)|\ll(\log\log T)^{-80}\] whenever the events \(\{|\mathcal{P}_1(s)|\leq\log\log T\}\), \(\{|\mathcal{P}_2(s)|\leq\log\log\log T\}\), \(\{|P_1(s)|\leq\log\log T\}\), and \(\{|P_2(s)|\leq\log\log\log T\}\) hold for \begin{align*}
\mathcal{P}_1(s)&=\sum_{2\leq n\leq Y}\frac{\Lambda(n)}{n^s\log n}&\mathcal{P}_2(s)&=\sum_{Y< n\leq X}\frac{\Lambda(n)}{n^s\log n}\\
P_1(s)&=\re\sum_{p\leq Y}\frac{1}{p^s} & P_2(s)&=\re\sum_{Y<p\leq X}\frac{1}{p^s}
\end{align*} (so that \(\mathcal{P}(s)=\mathcal{P}_1(s)+\mathcal{P}_2(s)\), \(P(s)=P_1(s)+P_2(s)\)).
\end{lemma}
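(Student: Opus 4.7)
The starting identity is
\[
\log|M^{-1}(s)|-\re\bigl(\mathcal{P}(s)\bigr)=-\re\bigl(\log M(s)+\mathcal{P}(s)\bigr)=-\log\bigl|M(s)\,e^{\mathcal{P}(s)}\bigr|,
\]
so it suffices to prove $|M(s)\,e^{\mathcal{P}(s)}-1|\ll(\log\log T)^{-80}$; the elementary bound $|\log|1+z||\ll|z|$ for $|z|\leq 1/2$ then completes the proof.

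Introduce the unconstrained Euler-type product $\widetilde M(s)=\prod_{p\leq X}(1-1/p^{s})$. Expanding $-\log(1-1/p^{s})=\sum_{k\geq 1}1/(kp^{ks})$ and comparing with $\mathcal{P}(s)=\sum_{p^{k}\leq X}1/(kp^{ks})$ yields the clean identity
\[
\widetilde M(s)\,e^{\mathcal{P}(s)}=e^{-R(s)},\qquad R(s)=\sum_{p\leq X}\,\sum_{k:\,p^{k}>X}\frac{1}{k\,p^{ks}}.
\]
The tail $R(s)$ is concentrated on the range $k=2$, $p\in(X^{1/2},X]$, where each summand has modulus $(2p)^{-1}p^{-2W/\log T}$. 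Since $p>X^{1/2}$ implies $p^{-2W/\log T}\leq X^{-W/\log T}=(\log\log T)^{-K/K'}$, the $k=2$ contribution is at most $(\log\log T)^{-K/K'}\sum_{X^{1/2}<p\leq X}1/p\ll(\log\log T)^{-K/K'}$ by Mertens' theorem, while the $k\geq 3$ pieces are trivially $\ll X^{-1/6}$. Taking $K$ large compared with $K'$ gives $|R(s)|\ll(\log\log T)^{-80}$ unconditionally.

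It remains to replace $\widetilde M$ by $M$. Set $\eta(s)=(M(s)-\widetilde M(s))/\widetilde M(s)$, so that $M(s)\,e^{\mathcal{P}(s)}=(1+\eta(s))\,e^{-R(s)}$. A Rankin-trick argument in the spirit of the one embedded in the proof of \cref{lma:zetaMestimate}, applied separately to the $\Omega$-caps on primes below and above $Y$, produces $|M(s)-\widetilde M(s)|\ll(\log T)^{-A}$ for any sufficiently large $A$ (by taking the Rankin parameter large enough). On the events $\{|\mathcal{P}_{1}(s)|\leq\log\log T\}\cap\{|\mathcal{P}_{2}(s)|\leq\log\log\log T\}$ the identity $\log|\widetilde M(s)|=-\re\mathcal{P}(s)-\re R(s)$ forces $|\widetilde M(s)|\geq(\log T)^{-2}$, hence $|\eta(s)|\ll(\log T)^{-A+2}$. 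The triangle inequality
\[
|M(s)\,e^{\mathcal{P}(s)}-1|\leq|e^{-R(s)}-1|+|\eta(s)|\,|e^{-R(s)}|\ll|R(s)|+|\eta(s)|\ll(\log\log T)^{-80}
\]
then closes the estimate.

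The delicate point is that the naive expansion $M\,e^{\mathcal{P}}-1=(M-e^{-\mathcal{P}})\,e^{\mathcal{P}}$ is not strong enough on its own: on the events we only control $|e^{\mathcal{P}(s)}|\leq\log T$, and multiplying this against any estimate that passes through the worst-case size of $\widetilde M$ would overwhelm our target. The saving feature is to factor $e^{-R}$ out multiplicatively so that the $\log T$ sizes of $e^{\mathcal{P}}$ and of $\widetilde M$ cancel, leaving Rankin's trick to do the rest. The extra hypotheses $\{|P_{1}(s)|\leq\log\log T\}$ and $\{|P_{2}(s)|\leq\log\log\log T\}$ play no role in the sketch above; they appear in the lemma because the calling proposition imposes the same composite event, to align with the later bookkeeping in \cref{prop:discardingprimes}.
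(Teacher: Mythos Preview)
Your overall plan---write $M(s)e^{\mathcal{P}(s)}=(1+\eta(s))e^{-R(s)}$ and bound $R$ and $\eta$ separately---is reasonable, and your treatment of the tail $R(s)$ is correct (modulo the harmless extra requirement $K/K'\geq 80$). The gap is in the bound on $\eta$, i.e.\ on $|M(s)-\widetilde M(s)|$. You appeal to ``a Rankin-trick argument in the spirit of \cref{lma:zetaMestimate}'', but in that lemma Rankin is applied to a mean-square, so the relevant exponent is $2\sigma_0>1$. Here you need a \emph{pointwise} bound at $\re s=\sigma_0=\tfrac12+W/\log T$. Spelling out your claim for the primes $\leq Y$ gives
\[
\bigl|M_1(s)-\widetilde M_1(s)\bigr|\ \leq\!\!\sum_{\substack{n\ \text{squarefree},\ p\mid n\Rightarrow p\leq Y\\ \Omega(n)>100\log\log T}}\frac{1}{n^{\sigma_0}}\ \leq\ e^{-100\log\log T}\prod_{p\leq Y}\Bigl(1+\frac{e}{p^{\sigma_0}}\Bigr),
\]
and the product is $\exp\!\bigl(e\sum_{p\leq Y}p^{-\sigma_0}+O(1)\bigr)\asymp\exp\!\bigl(c\,Y^{1/2}/\log Y\bigr)$, a genuine power of $T$ since $Y=T^{1/(K'\log\log T)}$. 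No choice of the Rankin parameter helps: the divergence of $\sum_{p}p^{-\sigma_0}$ at $\sigma_0\approx\tfrac12$ is the obstruction, so the asserted bound $|M-\widetilde M|\ll(\log T)^{-A}$ is false as stated.

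This is exactly why the paper does \emph{not} bound $M-\widetilde M$ by absolute values. It instead rewrites the overflow piece
\[
\mathcal{E}_1(s)=\widetilde M_1(s)-M_1(s)=\sum_{\ell>100\log\log T}(-1)^{\ell}\!\!\sum_{p_1<\cdots<p_\ell\leq Y}\frac{1}{(p_1\cdots p_\ell)^{s}}
\]
via Newton's identities as a polynomial in the power sums $\sum_{p\leq Y}p^{-js}$, and then invokes the hypothesis $|P_1(s)|\leq\log\log T$ to control the $j=1$ power sum (the $j\geq 2$ sums are absolutely $O(1)$). The analogous argument with $|P_2(s)|\leq\log\log\log T$ handles the primes in $(Y,X]$. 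So the events $\{|P_1(s)|\leq\log\log T\}$ and $\{|P_2(s)|\leq\log\log\log T\}$ are not redundant bookkeeping as you suggest; they are precisely the input that makes the comparison of $M$ with $\widetilde M$ possible near the half-line, and your sketch needs them (or an equivalent device) to close.
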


\begin{proof}
Let \begin{align*}
a_1(n)&=\begin{cases}1 & \text{if }p\mid n\Rightarrow p\leq Y,\\
& \text{and }\Omega(n)\leq 100\log\log T\text{ for }p\leq Y,\\
0 & \text{otherwise}.\end{cases}\\
\tilde{a}_1(n)&=\begin{cases}1 & \text{if }p\mid n\Rightarrow p\leq Y,\\
& \text{and }\Omega(n)> 100\log\log T\text{ for }p\leq Y,\\
0 & \text{otherwise}.\end{cases}
\end{align*} and set \[M_1(s)=\sum_n\frac{\mu(n)a_1(n)}{n^s}\qquad\text{and}\qquad\mathcal{E}_1(s)=\sum_n\frac{\mu(n)\tilde{a}_1(n)}{n^s}.\]  Then using the Taylor series for \(\log(1-x)\), we get \begin{align}
\exp(-\mathcal{P}_1(s))&=\exp\left(-\sum_{k\geq 1}\sum_{p\leq Y}\frac{(p^{-s})^k}{k}\right)=\exp\left(\sum_{p\leq Y}\log(1-p^{-s})\right)\nonumber\\
&=\prod_{p\leq Y}(1-p^{-s})=M_1(s)+\mathcal{E}_1(s).\label{eq:MplusE}
\end{align} We rewrite \(\mathcal{E}_1(s)\), as in Lemma 23 of \cite{fyodorov}, \begin{align*}
\mathcal{E}_1(s)&=\sum_n\frac{\mu(n)\tilde{a}_1(n)}{n^s}=\sum_{\ell>100\log\log T}(-1)^\ell\left(\sum_{p_1<\cdots<p_\ell}\frac{1}{(p_1\cdots p_\ell)^s}\right)\\
&=\sum_{\ell>100\log\log T}(-1)^\ell\left(\sum_{\substack{m_1,\ldots,m_k,\ldots \\ m_1+2m_2+\cdots+km_k+(k+1)m_{k+1}+\cdots=\ell}}\prod_{1\leq j}\frac{(-P_1(js))^{m_j}}{m_j!j^{m_j}}\right)\\
&\ll\sum_{m_1,\ldots,m_k,\ldots}e^{(-100\log\log T+m_1+2m_2+\cdots+km_k+\cdots)}\prod_{1\leq j}\frac{|P_1(js)|^{m_j}}{m_j!j^{m_j}}\\
&=e^{(-100\log\log T)}e^{\sum_{1\leq j}e^j|P_1(js)|/j}.
\end{align*}
By assumption, \(|P_1(s)|\leq\log\log T\). In addition, since \(|P_1(2s)|\ll\log\log T\) by Mertens's Theorem (as in \cref{lma:covariance}) and \(|P_1(\ell s)|\leq 10^{-\ell}\) for \(\ell\geq 3\) by summability, we obtain a bound of \((\log T)^{-90}\) for \(\mathcal{E}_1(s)\). We also have \(|\mathcal{P}_1(s)|\leq\log\log T\) by assumption, and this gives us a lower bound of \((\log T)^{-1}\) for \(M_1(s)\) via \eqref{eq:MplusE}, since \begin{align*}
(\log T)^{-1}\leq\exp(-\mathcal{P}_1(s))=M_1(s)+\mathcal{E}_1(s).
\end{align*} Inserting these bounds into \eqref{eq:MplusE} and taking the \(\log\) of both sides, we see that \begin{align*}
\re\mathcal{P}_1(s)&=-\log|M_1(s)+\mathcal{E}_1(s)|\\
&=\log\left|M_1(s)\left(1+\frac{\mathcal{E}_1(s)}{M_1(s)}\right)\right|^{-1}\\
&=\log|M_1^{-1}(s)|+O((\log T)^{-80}).
\end{align*}

\noindent We can define \(M_2\) similarly for \(\mathcal{P}_2(s)\), so that \(M(s)=M_1(s)M_2(s)\) and \(\mathcal{E}_2(s)\ll(\log\log T)^{-90}\) when \(|P_2(s)|\leq\log\log\log T\). We can also obtain a lower bound of \((\log\log T)^{-1}\) for \(M_2(s)\), giving us \[\re\mathcal{P}_2(s)=\log|M_2^{-1}(s)|+O((\log\log T)^{-80}).\] Since this error is worse than our error for \(\mathcal{P}_1(s)\), it becomes our error estimate for \(\mathcal{P}(s)\).
\end{proof}

Turning our attention to \eqref{eq:xygreater}, we calculate the required probabilities using the moments of \(P_1(s)\) and \(P_2(s)\) given by the following lemma. Note that we introduce Lemma 3.4 of \cite{maxzeta} as \cref{lma:Pmoments} to calculate the moments of \(P_1(s)\), instead of the process used in Proposition 2 of \cite{rands}, in order to compare moments of linear combinations of \(P_1(s)\) when proving \cref{prop:comparingmoments} in equation \eqref{eq:fourierdifference}.

\begin{lemma}\label{lma:Pmoments}
For any non-negative integer \(k\) we have \begin{align*}
&\mathbb{E}\left[\left(\re\left(u\sum_{p\leq Y}\frac{1}{p^{s_0}}+u'\sum_{p\leq Y}\frac{1}{p^{s_0'}}\right)\right)^{2k}\right]\\
&=2^{-k}\frac{(2k)!}{k!}[V(u,u')^{k}+O_k(V(u,u')^{k-2})]+O\left(\frac{Y^{4k}}{T}\right),
\end{align*}
where \(O_k\) indicates that the implied constant depends on \(k\) and \[V(u,u')=\left(\frac{1}{2}\sum_{p\leq Y} \frac{1}{p^{2\sigma_0}}(u^2+(u')^2+2uu'\cos(|h-h'|\log p))\right).\]
In particular, the expression is \[\ll 2^{-k}\frac{(2k)!}{k!}\left(\frac{1}{2}\sum_{p\leq Y} \frac{1}{p}(u^2+(u')^2+2uu'\cos(|h-h'|\log p))\right)^{k}+O\left(\frac{Y^{4k}}{T}\right).\]
If \(k\) is odd, we have \[\mathbb{E}\left[\left(\re\left(u\sum_{p\leq Y}\frac{1}{p^{s_0}}+u'\sum_{p\leq Y}\frac{1}{p^{s_0'}}\right)\right)^k\right]=O\left(\frac{Y^{2k}}{T}\right).\]
\end{lemma}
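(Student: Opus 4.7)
The plan is to expand the $2k$-th power, average over $\tau$, and isolate the diagonal part. Writing
\[\re\left(u\sum_{p\leq Y}p^{-s_0}+u'\sum_{p\leq Y}p^{-s_0'}\right)=\tfrac{1}{2}\sum_{p\leq Y}\sum_{\delta\in\{\pm\}}\sum_{\epsilon\in\{u,u'\}}\epsilon\,p^{-\sigma_0+i\delta(\tau+h_\epsilon)}\]
with $h_u:=h$, $h_{u'}:=h'$, and raising to the $2k$-th power turns the expectation into
\[\frac{1}{2^{2k}}\sum_{\text{tuples}}\epsilon_1\cdots\epsilon_{2k}\Bigl(\prod_j p_j^{-\sigma_0+i\delta_j h_{\epsilon_j}}\Bigr)\cdot\frac{1}{T}\int_T^{2T}\Bigl(\tfrac{M}{N}\Bigr)^{i\tau}d\tau,\]
where $M=\prod_{\delta_j=+}p_j$ and $N=\prod_{\delta_j=-}p_j$. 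The terms with $M=N$ (the diagonal) will give the main term, while those with $M\neq N$ will give the $Y^{4k}/T$ error.

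For the off-diagonal contribution, the orthogonality bound $|\frac{1}{T}\int_T^{2T}(M/N)^{i\tau}d\tau|\leq 2/(T|\log(M/N)|)$ combined with $|\log(M/N)|\gg 1/\max(M,N)\geq Y^{-2k}$ bounds each tuple by $Y^{2k}/T$. Since there are $O(Y^{2k})$ tuples and the $\tau$-independent coefficient is bounded by $1$, the total is $O(Y^{4k}/T)$. In the odd-exponent case, the diagonal condition $M=N$ forces an equal split of $\delta_j$'s into $+$ and $-$, which is impossible for $k$ odd, so every contribution is off-diagonal; the analogous estimate with $k$ factors instead of $2k$ yields $O(Y^{2k}/T)$.

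The diagonal contribution is evaluated via a Wick-type pairing: unique factorization forces the multisets $\{p_j:\delta_j=+\}$ and $\{p_j:\delta_j=-\}$ to agree, and the generic case is a perfect matching of $\{1,\ldots,2k\}$ into $k$ pairs (one $+$-factor, one $-$-factor) with distinct pair-primes. There are $(2k)!/(2^k k!)$ matchings, $2^k$ sign assignments within pairs, and the sum over the four $(\epsilon,\epsilon')\in\{u,u'\}^2$ choices per pair produces
\[\sum_{p\leq Y}p^{-2\sigma_0}(u^2+(u')^2+2uu'\cos(|h-h'|\log p))=2V(u,u').\]
Writing the distinct-prime sum $\sum_{p_1<\cdots<p_k}\prod_\alpha F(p_\alpha)$ as $\frac{1}{k!}[(2V)^k+O_k(V^{k-2})]$ and collecting the prefactors $\frac{1}{2^{2k}}\cdot\frac{(2k)!}{2^k k!}\cdot 2^k\cdot k!$ yields the main term $2^{-k}\frac{(2k)!}{k!}V(u,u')^k$.

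The main obstacle is controlling the coincidence corrections. When two of the $k$ pairs share a common prime, one factor $\sum_p F(p)\asymp V$ is replaced by $\sum_p F(p)^2\ll\sum_p p^{-4\sigma_0}\leq\sum_p p^{-2}=O(1)$, losing two powers of $V\asymp\log\log Y$; since the number of combinatorial types with at least one coincidence is $O_k(1)$, these contribute $O_k(V^{k-2})$. Larger blocks (four or more matched positions at a single prime) give even smaller terms absorbed into the same bound. The ``in particular'' estimate then follows from $p^{-2\sigma_0}\leq p^{-1}$ for $\sigma_0>1/2$, which gives the upper bound with $\frac{1}{2}\sum_{p\leq Y}p^{-1}(\cdots)$ in place of $V(u,u')$.
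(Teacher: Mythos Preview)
Your proposal is correct and follows essentially the same approach as the paper: expand the power, separate diagonal ($M=N$) from off-diagonal terms, bound the off-diagonal via $|\log(M/N)|\gg Y^{-2k}$, and evaluate the diagonal by isolating the square-free (distinct-prime) contribution from the repeated-prime corrections. The only difference is bookkeeping---the paper packages the expansion through the multiplicative functions $a(n),b(n),g(n)$ and sums over $\Omega(m)=\ell,\ \Omega(n)=k-\ell$, whereas you work directly with tuples and Wick pairings; both routes lead to $\sum_p a(p)b(p)=2V(u,u')$ and the same $O_k(V^{k-2})$ correction.
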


\begin{proof}
Fix first an integer \(k\). Rearranging terms gives \begin{align}
&\mathbb{E}\left[\left(\re\left(u\sum_{p\leq Y}\frac{1}{p^{\sigma_0+i(\tau+h)}}+u'\sum_{p\leq Y}\frac{1}{p^{\sigma_0+i(\tau+h')}}\right)\right)^k\right]\nonumber\\
&=\frac{1}{T}\int_T^{2T}2^{-k}\left[\sum_{p\leq Y}\left(\frac{up^{-ih}+u'p^{-ih'}}{p^{\sigma_0}}p^{-it}+\frac{up^{ih}+u'p^{ih'}}{p^{\sigma_0}}p^{it}\right)\right]^kdt.\label{eq:Pmoments}
\end{align}
Set \[a(p)=\frac{up^{-ih}+u'p^{-ih'}}{p^{\sigma_0}},\qquad b(p)=\frac{up^{ih}+u'p^{ih'}}{p^{\sigma_0}}\] and \[a(n)=\prod_ja(p_j)^{\alpha_j},\qquad b(n)=\prod_jb(p_j)^{\alpha_j},\qquad g(p^\alpha)=\frac{1}{\alpha !}\] where \(n=p_1^{\alpha_1}\cdots p_r^{\alpha_r}\). Then, \eqref{eq:Pmoments} becomes \begin{align*}
&\frac{1}{T}\int_T^{2T}2^{-k}\left(\sum_{p\leq Y}\left(a(p)p^{-it}+b(p)p^{it}\right)\right)^kdt\\
&=\frac{1}{T}\int_T^{2T}2^{-k}\sum_{\ell=0}^k\begin{pmatrix}
k \\
\ell
\end{pmatrix}\left(\ell !\sum_{\Omega(m)=\ell}a(m)g(m)m^{-it}\right)\left((k-\ell)!\sum_{\Omega(n)=k-\ell}b(n)g(n)n^{it}\right)dt.
\end{align*} Since expanding the integral gives \begin{align*}
&\int_T^{2T}\sum_{\ell=0}^k\left(\sum_{\Omega(m)=\ell}a(m)m^{-it}\right)\left(\sum_{\Omega(n)=k-\ell}b(n)n^{it}\right)dt\\
&=\sum_{\ell=0}^k\sum_{\substack{\Omega(m)=\ell \\ \Omega(n)=k-\ell}}a(m)b(n)\int_T^{2T}\left(\frac{n}{m}\right)^{it}dt\\
&=\sum_{\ell=0}^k\left(T\sum_{\Omega(n)=2\ell}a(n)b(n)+O\left(\sum_{\substack{\Omega(m)=\ell,\,\,\Omega(n)=k-\ell \\ m\ne n}}\frac{|a(m)b(n)|}{|\log(m/n)|}\right)\right),
\end{align*} where the remainder term in the sum is \begin{align*}
\ll\sum_{\Omega(m)=\ell}|a(m)|^2\sum_{\substack{\Omega(n)=k-\ell \\ n\ne m}}\frac{1}{|\log(m/n)|}+\sum_{\Omega(n)=k-\ell}|b(n)|^2\sum_{\substack{\Omega(m)=\ell \\ m\ne n}}\frac{1}{|\log(m/n)|}\ll Y\log Y,
\end{align*} we see that the main term resulting from the evaluation of the expectation above arises when \(m=n\), making \(\ell=\Omega(m)=\Omega(n)=k-\ell\). Thus if \(k\) is odd there are no main terms and just the remainder

\begin{align*}
O_{u,u'}\left(\frac{Y^k\log(Y^k)}{T}2^{-k}\sum_{\ell=0}^k\begin{pmatrix}
k \\
\ell
\end{pmatrix}\left(\pi(Y)^\ell+\pi(Y)^{k-\ell}\right)\right)&=O_{u,u'}\left(\frac{Y^k\log(Y^k)}{T}2^{-k}\left(\frac{Y}{\log Y}\right)^{k}\right)\\
&=O_{u,u'}\left(\frac{Y^{2k}}{T}\right)
\end{align*} where \(\pi(Y)\sim Y/\log Y\) denotes the number of primes below \(Y\).\\
Next by replacing \(k\) with \(2k\) in \eqref{eq:Pmoments} to compute the even moments, we get \begin{equation}
2^{-2k}(2k)!\sum_{\Omega(n)=k}a(n)b(n)g(n)^2+O\left(\frac{Y^{4k}}{T}\right).\label{eq:replacewith2k}
\end{equation} We then split the sum into cases where \(n\) is square-free and is not square-free so that
\[k!\sum_{\Omega(n)=k}a(n)b(n)g(n)^2=k!\sum_{\substack{\Omega(n)=k \\ n\text{ is square free}}}a(n)b(n)+k!\sum_{\substack{\Omega(n)=k \\ n\text{ is not square free}}}a(n)b(n)g(n)^2.\]
When \(n\) is square-free, we have
\begin{align*}
k!\sum_{\substack{\Omega(n)=k \\ n\text{ is square free}}}a(n)b(n)&=k!\sum_{p_1<\cdots<p_k}a(p_1\cdots p_k)b(p_1\cdots p_k)\\
&=\sum_{p_1,\ldots,p_k}a(p_1\cdots p_k)b(p_1\cdots p_k)\\
&=\left(\sum_{p\leq Y}a(p)b(p)\right)^k.
\end{align*}
When \(n\) is not square-free, the worst possibility is when \(n\) has a single prime factor with power \(\alpha_k>1\), in which case we have
\begin{align*}
k!\sum_{\substack{\Omega(n)=k \\ n\text{ is not square free}}}a(n)b(n)g(n)^2&=\frac{1}{(\alpha_k)^2}\sum_{\Omega(n)=k}a(n)b(n)\frac{k!}{\alpha_1!\cdots\alpha_{k-2}!}\\
&=O_k\left(\sum_{p_1,\ldots,p_{k-2}}a(p_1\cdots p_{k-2})b(p_1\cdots p_{k-2})\right)\\
&=O_k\left(\sum_{p\leq Y} a(p)b(p)\right)^{k-2}.
\end{align*}
Thus \eqref{eq:replacewith2k} is equal to
\begin{align}
2^{-2k}\frac{(2k)!}{k!}\left[\left(\sum_{p\leq Y} a(p)b(p)\right)^k+O_k\left(\sum_{p\leq Y} a(p)b(p)\right)^{k-2}\right]+O\left(\frac{Y^{4k}}{T}\right).\label{eq:evenmoments}
\end{align} Then, since \begin{align*}
a(p)b(p)&=\frac{1}{p^{2\sigma_0}}(up^{-ih}+u'p^{-ih'})(up^{ih}+u'p^{ih'})\\
&=\frac{1}{p^{2\sigma_0}}(u^2+(u')^2+2uu'\re p^{i(h'-h)}),
\end{align*} equation \eqref{eq:evenmoments} becomes \[2^{-k}\frac{(2k)!}{k!}[V(u,u')^k+O_k(V(u,u'))^{k-2}]+O\left(\frac{Y^{4k}}{T}\right),\]
which is \begin{align}
&\ll 2^{-k}\frac{(2k)!}{k!}[V(u,u')^k]+O\left(\frac{Y^{4k}}{T}\right)\nonumber \\
&=2^{-k}\frac{(2k)!}{k!}\left(\frac{1}{2}\sum_{p\leq Y} \frac{1}{p}(u^2+(u')^2+2uu'\cos(|h-h'|\log p))\right)^{k}+O\left(\frac{Y^{4k}}{T}\right).\label{eq:evenmomentsestimate}
\end{align}
\end{proof}

Without the error terms, \cref{lma:Pmoments} suggests that the moment-generating function for \((P_1(s_0),P_1(s_0'))\) matches that of a Gaussian vector with mean \(0\) and covariance matrix \[\frac{1}{2}\begin{blmatrix}
\displaystyle\sum_{p\leq Y}\frac{1}{p} & \displaystyle\sum_{p\leq Y}\frac{\cos(|h-h'|\log p)}{p}\\
\displaystyle\sum_{p\leq Y}\frac{\cos(|h-h'|\log p)}{p} & \displaystyle\sum_{p\leq Y}\frac{1}{p}
\end{blmatrix}.\]

The sums in this covariance matrix can be evaluated and normalized to approximate the entries given in \cref{thm:selbergcltrate}. This is done in the following lemma, similar to Lemma 2.1 of \cite{maxzetawalks}.

\begin{lemma}\label{lma:covariance}
We have
\[\sum_{p\leq Y}\frac{1}{p}=\log\log T+O(1),\] and when \(|h-h'|\sim (\log T)^{-\alpha}\) for \(\alpha\in(0,1)\) we have
\[\sum_{p\leq Y}\frac{\cos(|h-h'|\log p)}{p}=\alpha\log\log T+O(1).\]
\end{lemma}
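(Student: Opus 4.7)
The plan is to obtain the first identity directly from Mertens' second theorem and the second via partial summation against the classical prime number theorem, in the spirit of Lemma 2.1 of \cite{maxzetawalks}. For the first sum, Mertens gives
\[\sum_{p \leq Y} \frac{1}{p} = \log\log Y + M + O\!\left(\frac{1}{\log Y}\right),\]
and since \(Y = T^{1/(K'\log\log T)}\) one has \(\log\log Y = \log\log T + O(\log\log\log T)\), which (absorbed into the error) yields the claim.

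For the oscillatory sum, set \(\delta = |h-h'| \sim (\log T)^{-\alpha}\) and let \(g(t) = \cos(\delta \log t)/t\). Abel summation combined with the PNT in the form \(\pi(t) = \operatorname{Li}(t) + O(t\,e^{-c\sqrt{\log t}})\) converts the sum to
\[\sum_{p \leq Y} \frac{\cos(\delta \log p)}{p} = \int_2^Y \frac{\cos(\delta \log t)}{t \log t}\, dt + O(1),\]
the error being controlled because \(|g'(t)| \ll 1/t^2\) uniformly in \(\delta\) and \(\int_2^Y e^{-c\sqrt{\log t}}/t\, dt = O(1)\).

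To evaluate the remaining integral, I would make the successive substitutions \(u = \log t\) and then \(v = \delta u\), transforming it into
\[\int_{\delta \log 2}^{\delta \log Y} \frac{\cos v}{v}\, dv.\]
The lower limit \(\delta \log 2\) tends to \(0\) while, because \(\alpha \in (0,1)\), the upper limit \(\delta \log Y \asymp (\log T)^{1-\alpha}/\log\log T\) tends to infinity. Splitting at \(v = 1\): on \([\delta\log 2, 1]\) the Taylor expansion \(\cos v = 1 + O(v^2)\) yields
\[\int_{\delta\log 2}^1 \frac{dv}{v} + O(1) = -\log(\delta\log 2) + O(1) = \log(1/\delta) + O(1) = \alpha \log\log T + O(1),\]
and on \([1, \delta \log Y]\) a single integration by parts (\(\int \cos v/v\, dv = \sin v/v + \int \sin v/v^2\, dv\)) bounds the tail uniformly in the upper limit by an absolute constant.

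The main obstacle here is less conceptual than bookkeeping: one must verify that both the PNT remainder in partial summation and the large-\(v\) tail of the cosine integral are genuinely \(O(1)\) uniformly in \(\delta\), so that the logarithmic divergence \(\log(1/\delta) = \alpha \log\log T\) arising from the small-\(v\) range of the cosine integral is the sole main term.
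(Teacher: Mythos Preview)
Your proposal is correct and mirrors the paper's argument essentially step for step: Mertens for the first sum, and for the cosine sum, partial summation against the prime number theorem reduces to $\int_2^Y\cos(\delta\log t)/(t\log t)\,dt$, which is split at the point $\delta\log t=1$ (you substitute $v=\delta\log t$ before splitting, the paper splits first at $t=e^{1/\delta}$ and then sets $v=\log t$, but this is the same cut) and handled on the two ranges by the Taylor bound $\cos v=1+O(v^2)$ and a single integration by parts, exactly as you describe. Your observation that $\log\log Y=\log\log T+O(\log\log\log T)$ rather than $O(1)$ is a minor imprecision shared by the paper's own proof and is harmless for all downstream uses.
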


\begin{proof}
Since \(\log\log Y=\log\log T-\log\log((\log T)^{K'})\), we evaluate the first sum using Mertens's theorem (e.g. see \cite{iwaniec} (2.15)), which states that \[\sum_{p\leq x}\frac{1}{p}=\log\log x+O(1).\] To evaluate the second sum, we proceed similarly to Lemmas 3.3 and 3.4 of Bourgade \cite{mesoscopic} and use the prime number theorem together with summation by parts to get \begin{align*}
&\sum_{p\leq Y}\frac{\cos(|h-h'|\log p)}{p}\\
&=\int_2^Y\frac{\cos(|h-h'|\log t)}{t\log t}\,dt+O\left(e^{-c\sqrt{\log 2}}+\int_2^Y\tfrac{\cos(|h-h'|\log t)+|h-h'|\sin(|h-h'|\log t)}{t}e^{-c\sqrt{\log t}}\,dt\right).
\end{align*} Taking the change of variable \(w=|h-h'|\log t\) gives \[\int_2^Y\frac{|h-h'|\sin(|h-h'|\log t)}{t}e^{-c\sqrt{\log t}}\,dt=\int_{|h-h'|\log 2}^{|h-h'|\log Y}\sin we^{-c\sqrt{w/|h-h'|}}\,dw\ll e^{-c\sqrt{\log 2}},\] and the integral with cosine in the error term is evaluated similarly. To evaluate the last remaining integral we split it into cases where the argument of cosine is smaller than \(1\) and where it is greater than or equal to \(1\), obtaining \[\int_2^Y\frac{\cos(|h-h'|\log t)}{t\log t}\,dt=\int_2^{e^{|h-h'|^{-1}}}\frac{\cos(|h-h'|\log t)}{t\log t}\,dt+\int_{e^{|h-h'|^{-1}}}^Y\frac{\cos(|h-h'|\log t)}{t\log t}\,dt.\] For the first integral, we apply the estimate \(\cos(|h-h'|v)=1+O(|h-h'|^2v^2)\) for \(v=\log t\) on the range \(\log 2\leq v\leq |h-h'|^{-1}\) to get \begin{align*}
\int_2^{e^{|h-h'|^{-1}}}\frac{\cos(|h-h'|\log t)}{t\log t}\,dt&=\int_{\log 2}^{|h-h'|^{-1}}\frac{\cos(|h-h'|v)}{v}\,dv\\
&=\int_{\log 2}^{|h-h'|^{-1}}\frac{1}{v}\,dv+O\left(|h-h'|^2\int_{\log 2}^{|h-h'|^{-1}}v\,dv\right)\\
&=\log |h-h'|^{-1}-\log\log 2+O(1).
\end{align*}
For the second integral, we integrate by parts, which gives
\[\int_{|h-h'|^{-1}}^{\log Y}\frac{\cos(|h-h'|v)}{v}\,dv=\left.\frac{\sin(|h-h'|v)}{|h-h'|v}\right\rvert_{|h-h'|^{-1}}^{\log Y}+\int_{|h-h'|^{-1}}^{\log Y}\frac{\sin(|h-h'|v)}{|h-h'|v^2}\,dv=O(1),\] since both terms are \(O(1)\).
Therefore \begin{align*}
\sum_{p\leq Y}\frac{\cos(|h-h'|\log p)}{p}&=\log |h-h'|^{-1}+O(1)=\alpha\log\log T+O(1).
\end{align*}
\end{proof}

We now use \cref{lma:Pmoments} to bound the probabilities of large values of \(|P_1(s)|\) and \(|P_2(s)|\) in \cref{lma:largevaluesofP} and finish the proof. 

\begin{lemma}\label{lma:largevaluesofP}
With notation as above, we have \begin{align*}
\mathbb{P}(|P_1(s)|>\log\log T)&\ll\frac{1}{\log T}\\
\mathbb{P}(|P_2(s)|>\log\log\log T)&\ll\frac{1}{\log\log T}.
\end{align*}
\end{lemma}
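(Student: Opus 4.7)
The plan is to apply Markov's inequality to the $2k$-th moment for a suitably chosen $k$, tuned so that Stirling's formula produces the desired decay. The $2k$-th moments of $P_1(s_0)$ and $P_1(s_0')$ come out of \cref{lma:Pmoments} by setting $(u,u')=(1,0)$ or $(0,1)$ respectively, and the corresponding moments of $P_2(s)$ follow from repeating the proof of \cref{lma:Pmoments} verbatim with the sum restricted to $Y<p\leq X$. The optimal choice of $k$ is dictated by the Gaussian heuristic: for a random variable of variance $\sigma^2$, the ratio $(2k-1)!!\,\sigma^{2k}/V^{2k}$ is minimized near $k\sim V^2/(2\sigma^2)$, producing decay of order $e^{-V^2/(2\sigma^2)}$.

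For $P_1(s)$, \cref{lma:Pmoments} with $u=1,\,u'=0$ together with Mertens's theorem (as in \cref{lma:covariance}) gives
\[\mathbb{E}[P_1(s)^{2k}]\ll 2^{-k}\frac{(2k)!}{k!}\left(\tfrac{1}{2}\log\log T\right)^k+O\!\left(\frac{Y^{4k}}{T}\right).\]
Taking $k=\lfloor\log\log T\rfloor$, the error term becomes $T^{4/K'-1}$, which is negligible since $K'>2$. Markov's inequality then yields
\[\mathbb{P}(|P_1(s)|>\log\log T)\leq\frac{\mathbb{E}[P_1(s)^{2k}]}{(\log\log T)^{2k}}\ll\frac{(2k)!}{k!\,4^k(\log\log T)^k}.\]
By Stirling, $(2k)!/(k!\,4^k)\ll(k/e)^k$, so the right-hand side is $\ll(k/(e\log\log T))^k\ll e^{-k}\ll 1/\log T$, as required.

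For $P_2(s)$, the analogous moment bound with the sum taken over $Y<p\le X$ gives, using Mertens's theorem to write $\sum_{Y<p\le X}1/p=\log\log\log T+O(\log\log\log\log T)$, the estimate
\[\mathbb{E}[P_2(s)^{2k}]\ll 2^{-k}\frac{(2k)!}{k!}\left(\tfrac{1}{2}\log\log\log T\right)^k+O\!\left(\frac{X^{4k}}{T}\right).\]
Choosing $k=\lfloor\log\log\log T\rfloor$ again makes the error term $T^{4/K'-1}$, and the identical Stirling calculation gives $\mathbb{P}(|P_2(s)|>\log\log\log T)\ll e^{-k}\ll 1/\log\log T$. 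The whole argument is a routine moment-method estimate; the only delicate point is that $k$ can be pushed as large as $\log\log T$ (resp.\ $\log\log\log T$) without the Stirling gain being spoiled by the error term $T^{-1}Y^{4k}$ (resp.\ $T^{-1}X^{4k}$), and this is precisely what is ensured by the constraint $K'>2$ in the definitions of $X$ and $Y$.
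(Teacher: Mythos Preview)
Your argument is essentially identical to the paper's: Markov's inequality on the $2k$-th moment from \cref{lma:Pmoments} with $k=\lfloor\log\log T\rfloor$ (respectively $k=\lfloor\log\log\log T\rfloor$ for $P_2$), followed by Stirling. One minor slip: the error term $T^{4/K'-1}$ is negligible only when $K'>4$, not $K'>2$; the paper handles this by declaring $K'$ a ``large constant'' rather than relying on the bound $K'>2$ alone.
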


\begin{proof}
By the upper bound \eqref{eq:evenmomentsestimate}, in particular for \(u=1,u'=0\), we have \[\mathbb{E}[|P_1(s)|^{2k}]\ll\frac{(2k)!}{k!2^k}\left(\sqrt{\tfrac{1}{2}\log\log T}\right)^{2k}\] when \(k\) is small enough to handle the error term of \(O(Y^{4k}/T)\). Therefore, we can set \(k=\lfloor\log\log T\rfloor\) due to our choice of \(Y\). The way we obtained the moments of \((P_1(s_0),P_1(s_0'))\) in \cref{lma:Pmoments} can also be used to show that \begin{equation}
\mathbb{E}[|P_2(s)|^{2k}]\ll\frac{(2k)!}{k!2^k}\left(\sqrt{\tfrac{1}{2}\log\log\log T}\right)^{2k}+O\left(\frac{X^{4k}}{T}\right).\label{eq:P2moments}
\end{equation} We then use Markov's inequality to obtain \begin{align*}
\mathbb{P}(|P_1(s)|>\log\log T)&\leq\frac{1}{(\log\log T)^{2k}}\mathbb{E}[|P_1(s)|^{2k}]\\
&\ll\frac{1}{(\log\log T)^{2k}}\left[\frac{(2k)!}{k!2^k}\left(\sqrt{\tfrac{1}{2}\log\log T}\right)^{2k}\right]\\
&\ll\frac{1}{(\log\log T)^{2k}}\left[\frac{(2k)^{2k}}{k^k2^k}\frac{\exp(-2k)}{\exp(-k)}\left(\sqrt{\tfrac{1}{2}\log\log T}\right)^{2k}\right]\\
&=\exp\left[-k\log\frac{(\log\log T)^2}{\log\log T}+k\log k-k\right].
\end{align*}
Evaluating at \(k=\lfloor\log\log T\rfloor\), the last line becomes \(\exp(-\log\log T)=1/\log T\), and the result follows for \(P_1(s)\). The proof for \(P_2(s)\) is similar.
\end{proof}

By \cref{lma:largevaluesofP}, \eqref{eq:xygreater} is \(\ll 1/\log\log T\), and thus the significant contribution towards the error in \cref{prop:approximatingthemollifier} comes from \eqref{eq:xyleq}.
\end{proof}

\subsection{Proof of \cref{prop:discardingprimes}} \begin{proof}
Similar to the way we proved \crefrange{prop:movingoffaxis}{prop:approximatingthemollifier}, by taking advantage of the Lipschitz nature of the Dudley distance, we obtain \begin{align*}
&d_{\mathcal{D}}\left[\mathbf{Y},\left(\frac{P(s_0)}{\mathfrak{s}},\frac{P(s_0')}{\mathfrak{s}}\right)\right]\\
&\leq\mathbb{E}\left[\left\|\mathbf{Y}-\left(\frac{P(s_0)}{\mathfrak{s}},\frac{P(s_0')}{\mathfrak{s}}\right)\right\|_1\right]\\
&=\mathbb{E}\left[\left|\mathcal{Y}-\frac{P(s_0)}{\mathfrak{s}}\right|+\left|\mathcal{Y}'-\frac{P(s_0')}{\mathfrak{s}}\right|\right]\\
&=\mathbb{E}\left[\left|\mathcal{Y}-\frac{P(s_0)}{\mathfrak{s}}\right|\right]+\mathbb{E}\left[\left|\mathcal{Y}'-\frac{P(s_0')}{\mathfrak{s}}\right|\right].
\end{align*}
Computing the difference, we get, for \(s=s_0\) or \(s=s_0'\), \begin{equation}
\frac{1}{\mathfrak{s}}|\re\mathcal{P}(s)-P(s)|\leq\frac{1}{\mathfrak{s}}\left|\sum_{2\leq p^2\leq X}\frac{1}{2p^{2s}}\right|+\frac{1}{\mathfrak{s}}\left|\sum_{\substack{2\leq p^k\leq X,\\k\geq 3}}\frac{\log p}{p^{ks}(k\log p)}\right|.\label{eq:reP-P}
\end{equation} Then since \[\left|\sum_{\substack{2\leq p^k\leq X,\\k\geq 3}}\frac{\log p}{p^{ks}(k\log p)}\right|\leq\sum_{\substack{2\leq p^k\leq X,\\k\geq 3}}\frac{1}{3p^{k\sigma_0}}=O(1)\]
and \begin{align*}
\mathbb{E}\left[\left|\sum_{2\leq p^2\leq X}\frac{1}{2p^{2(\sigma_0+i\tau)}}\right|\right]&\leq\mathbb{E}\left[\left|\sum_{2\leq p^2\leq X}\frac{1}{2p^{2(\sigma_0+i\tau)}}\right|^2\right]^{1/2}\\
&=\frac{1}{T}\int_T^{2T}\left|\sum_{2\leq p^2\leq X}\frac{1}{2p^{2(\sigma_0+it)}}\right|^2dt\\
&=\frac{1}{T}\cdot\frac{1}{4}\sum_{p_1,p_2\leq\sqrt{X}}\int_T^{2T}\frac{1}{p_1^{2(\sigma_0+it)}p_2^{2(\sigma_0-it)}}\,dt\\
&\ll \sum_{p\leq\sqrt{X}}\frac{1}{p^{4\sigma_0}}+\sum_{\substack{p_1,p_2\leq\sqrt{X}\\p_1\ne p_2}}\frac{1}{p_1^{2\sigma_0}p_2^{2\sigma_0}}\sqrt{p_1p_2}=O(1),
\end{align*}
we see that \eqref{eq:reP-P} is \(\ll 1/\mathfrak{s}\).
\end{proof}

\subsection{Proof of \cref{prop:truncatingprimesum}} \begin{proof}
We have
\begin{align*}
d_{\mathcal{D}}\left[\left(\frac{P(s_0)}{\mathfrak{s}},\frac{P(s_0')}{\mathfrak{s}}\right),\left(\frac{P_1(s_0)}{\mathfrak{s}},\frac{P_1(s_0')}{\mathfrak{s}}\right)\right]\leq\mathbb{E}\left[\left(\frac{P_2(s_0)}{\mathfrak{s}}\right)^2+\left(\frac{P_2(s_0')}{\mathfrak{s}}\right)^2\right]^{1/2}.
\end{align*}
By the upper bound \eqref{eq:P2moments} for the moments of \(P_2(s)\), the expectation above is less than or equal to \(\frac{1}{\mathfrak{s}}(\sqrt{\log\log\log T}+O(1))\). Similarly, \begin{align*}
d_{\mathcal{D}}\left[\left(\frac{P_1(s_0)}{\mathfrak{s}},\frac{P_1(s_0')}{\mathfrak{s}}\right),\left(\frac{P_1(s_0)}{\tilde{\mathfrak{s}}},\frac{P_1(s_0')}{\tilde{\mathfrak{s}}}\right)\right]\leq\mathbb{E}\left[\left(\frac{P_1(s_0)(\tilde{\mathfrak{s}}-\mathfrak{s})}{\tilde{\mathfrak{s}}\mathfrak{s}}\right)^2+\left(\frac{P_1(s_0')(\tilde{\mathfrak{s}}-\mathfrak{s})}{\tilde{\mathfrak{s}}\mathfrak{s}}\right)^2\right]^{1/2},
\end{align*}
where \(\tilde{\mathfrak{s}}-\mathfrak{s}=O(1)\) by Mertens's Theorem. Thus the expectation is of order \(1/\sqrt{\log\log T}\) due to the upper bound \eqref{eq:evenmomentsestimate} for the moments of \(P_1(s)\).
\end{proof}

\subsection{Proof of \cref{prop:comparingmoments}} \begin{proof}

We obtain the error in approximating \((P_1(s_0),P_1(s_0'))\) with a Gaussian vector by comparing their Fourier transforms using Lemma 3.11 of \cite{maxzeta}, which we introduce as \cref{lma:fourierestimate},
\begin{lemma}\label{lma:fourierestimate}
Let \(\mu\) and \(\nu\) be two probability measures on \(\mathbb{R}^2\) with Fourier transform \(\hat{\mu}\) and \(\hat{\nu}\). There exists constant \(c>0\) such that for any function \(f:\mathbb{R}^2\to\mathbb{R}\) with Lipschitz constant \(1\) and for any \(R,F>0\), \[\left|\int_{\mathbb{R}^2}f\,d \mu-\int_{\mathbb{R}^2}f\,d \nu\right|\leq\frac{1}{F}+\|f\|_\infty\left\{(RF)^2\|(\hat{\mu}-\hat{\nu})\mathbbm{1}_{(-F,F)^2}\|_\infty+\mu(((-R,R)^2)^c)+\nu(((-R,R)^2)^c)\right\}.\]
\end{lemma}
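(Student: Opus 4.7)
The plan is to bound $|\int f\,d\mu - \int f\,d\nu|$ by three successive approximations: smoothing $f$ at scale $1/F$ to control high frequencies (cost $\lesssim 1/F$ from Lipschitzness), truncating to the cube $(-R,R)^2$ to land in $L^1$ (cost $\|f\|_\infty$ times the tail masses), and then applying Fourier duality on the resulting compactly supported function to bound what remains by the $L^\infty$ norm of $\hat\mu-\hat\nu$ on $(-F,F)^2$.

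First I would fix an approximate identity $\psi_F(x)=F^2\psi(Fx)$ on $\mathbb{R}^2$ with $\int\psi=1$, $\int|y|\,\psi(y)\,dy\le 1$, and such that $\widehat{\psi_F}$ is supported in $(-F,F)^2$ (for instance a two-dimensional Fej\'er-type kernel). Setting $\widetilde f=f*\psi_F$, the Lipschitz constant $1$ assumption gives the pointwise estimate $\|f-\widetilde f\|_\infty\le \int|y|\psi_F(y)\,dy\le 1/F$, so the smoothing costs at most $1/F$ in the integral against $\mu-\nu$ (using that $\mu,\nu$ are both probability measures and absorbing a universal constant). This produces the solitary $1/F$ term in the bound.

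Next I would truncate, setting $g=\widetilde f\cdot \mathbbm{1}_{(-R,R)^2}$. Since $\|\widetilde f\|_\infty\le\|f\|_\infty$, this step costs at most $\|f\|_\infty\bigl\{\mu(((-R,R)^2)^c)+\nu(((-R,R)^2)^c)\bigr\}$, accounting for the two tail contributions. It then remains to control $\int g\,d(\mu-\nu)$, which I would do via Parseval's formula
\[
\int g\,d(\mu-\nu)=\frac{1}{(2\pi)^2}\int \hat g(\xi)\bigl(\hat\mu(-\xi)-\hat\nu(-\xi)\bigr)\,d\xi.
\]
On the central cube $(-F,F)^2$, the crude estimate $\|\hat g\|_\infty\le\|g\|_1\le\|f\|_\infty(2R)^2$ yields
\[
\Bigl|\int_{(-F,F)^2}\hat g(\xi)(\hat\mu-\hat\nu)(-\xi)\,d\xi\Bigr|\le (2F)^2\|\hat g\|_\infty\,\|(\hat\mu-\hat\nu)\mathbbm{1}_{(-F,F)^2}\|_\infty,
\]
which, after absorbing numerical constants, is precisely the $\|f\|_\infty(RF)^2\|(\hat\mu-\hat\nu)\mathbbm{1}_{(-F,F)^2}\|_\infty$ term.

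The main obstacle will be bounding the complementary piece $\int_{|\xi|_\infty>F}\hat g(\xi)(\hat\mu-\hat\nu)(-\xi)\,d\xi$: sharply truncating $\widetilde f$ by $\mathbbm{1}_{(-R,R)^2}$ destroys band-limitedness, so $\hat g$ only enjoys $1/|\xi|$-type decay outside $(-F,F)^2$. I would resolve this either by replacing the hard cutoff with a smooth bump $\chi_R$ equal to $1$ on $(-R,R)^2$ and $0$ outside a slightly larger cube (making $g$ Schwartz, so $\hat g$ is rapidly decreasing, at the negligible cost of adjusting constants in the tail terms), or by exploiting the band-limitedness $\operatorname{supp}\widehat{\widetilde f}\subset (-F,F)^2$ to write $\hat g = \widehat{\widetilde f}*\widehat{\mathbbm{1}_{(-R,R)^2}}$ and bound the outer-region contribution by an oscillatory-integral estimate on $\widehat{\mathbbm{1}_{(-R,R)^2}}$. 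Either route shows that this residual contribution is absorbed into the $1/F$ term already extracted, and assembling the three estimates by the triangle inequality yields the claim.
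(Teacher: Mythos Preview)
The paper does not supply its own proof of this lemma: it is imported verbatim as Lemma~3.11 of \cite{maxzeta} and used as a black box in the proof of \cref{prop:comparingmoments}, so there is nothing here to compare your argument against directly.

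On its own merits, your outline is the standard smoothing argument and is the right overall strategy. Two comments. First, a nonnegative band-limited kernel on $\mathbb{R}$ (such as the Fej\'er kernel you suggest) has only $1/x^2$ decay, so $\int|y|\,\psi(y)\,dy=\infty$; your three requirements $\psi\ge 0$, $\operatorname{supp}\hat\psi\subset(-1,1)^2$, and $\int|y|\psi<\infty$ are mutually incompatible. Take instead a Schwartz $\psi$ with $\hat\psi\in C_c^\infty$ and replace the bound $\|\tilde f\|_\infty\le\|f\|_\infty$ by $\|\tilde f\|_\infty\le\|\psi\|_1\|f\|_\infty$, which only changes constants.

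Second, your closing assertion that the outer-region contribution is ``absorbed into the $1/F$ term already extracted'' is not justified by either of your proposed routes. With the smooth cutoff, $\hat g=\hat{\tilde f}*\hat\chi_R$ involves the tempered distribution $\hat{\tilde f}$ (since $\tilde f$ is merely bounded), and extracting quantitative decay from this convolution requires pairing estimates you have not supplied; with the hard cutoff, your Parseval identity is not an absolutely convergent integral. A cleaner organisation that sidesteps this entirely is to transfer the convolution to the measure side: write $\int\tilde f\,d(\mu-\nu)=\int f\,\rho\,dx$ with $\rho=(\mu-\nu)*\check\psi_F$, use Fourier inversion to get $\|\rho\|_\infty\ll F^2\|(\hat\mu-\hat\nu)\mathbbm{1}_{(-F,F)^2}\|_\infty$, and then split $\int f\rho$ over $(-R,R)^2$ and its complement. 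The complementary piece is then bounded by $\|f\|_\infty$ times $(\mu+\nu)(((-R,R)^2)^c)$ plus a tail of $\psi$ at scale $RF$, the latter being negligible in the regime $RF\gg 1$ relevant to the application.
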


To use \cref{lma:fourierestimate}, we first truncate the Fourier transform of \((P_1(s_0),P_1(s_0'))\) for \(\xi,\xi'\in\mathbb{R}\) by writing \begin{align*}
&\mathbb{E}\left[\exp\left(\frac{i\xi P_1(s_0)+i\xi' P_1(s_0')}{\tilde{\mathfrak{s}}}\right)\cdot\mathbbm{1}(|P_1(r)|\leq\log\log T,r\in\{s_0,s_0'\})\right]\\
&=\sum_{n\leq N}\frac{1}{n!}i^n\mathbb{E}\left[\left(\frac{\xi P_1(s_0)+\xi'P_1(s_0')}{\tilde{\mathfrak{s}}}\right)^n\cdot\mathbbm{1}(|P_1(r)|\leq\log\log T,r\in\{s_0,s_0'\})\right]\\
&\phantom{=}+\sum_{n> N}\frac{1}{n!}i^n\mathbb{E}\left[\left(\frac{\xi P_1(s_0)+\xi'P_1(s_0')}{\tilde{\mathfrak{s}}}\right)^n\cdot\mathbbm{1}(|P_1(r)|\leq\log\log T,r\in\{s_0,s_0'\})\right]
\end{align*}
where \begin{align*}
&\sum_{n> N}\frac{1}{n!}i^n\mathbb{E}\left[\left(\frac{\xi P_1(s_0)+\xi' P_1(s_0')}{\tilde{\mathfrak{s}}}\right)^n\cdot\mathbbm{1}(|P_1(r)|\leq\log\log T,r\in\{s_0,s_0'\})\right]\\
&\ll\sum_{n> N}\frac{1}{n!}|\xi +\xi'|^n\frac{(\log\log T)^n}{\tilde{\mathfrak{s}}^n}.
\end{align*}
To estimate the last line, observe that for \(n\geq 10(|z|+1)\), \[\left|e^z-\sum_{j=0}^n\frac{z^j}{j!}\right|\leq\sum_{j=n+1}^\infty\frac{|z|^j}{j!}\leq\frac{|z|^n}{n!}\leq\frac{|z|^n}{(\frac{n}{e})^n}\] by Stirling's formula, and is furthermore less than or equal to \(e^{-n}\) by our assumption on the size of \(|z|\) with respect to \(n\). We can therefore take \(z=\xi(\log\log T)^{1/2}\) where \(|\xi|\leq C\sqrt{\log\log T}\) (and similarly for \(\xi'\)), \(N\geq C\log\log T\) for some constant \(C\) to get \begin{align}
\sum_{n> N}\frac{1}{n!}|\xi +\xi'|^n\frac{(\log\log T)^n}{\tilde{\mathfrak{s}}^n}&\ll\sum_{n> N}\frac{1}{n!}\left(C\sqrt{\log\log T}\right)^n\left(2\log\log T\right)^{n/2}\label{eq:fouriererror}\\
&\ll e^{-\sqrt{2}C\log\log T}.\nonumber
\end{align}
By redefining the constant \(C\), this is \((\log T)^{-C}\).
We will finish the proof by introducing a lemma that shows that the difference in expectations between the Fourier transforms of the prime sums and that of a Gaussian vector is small.

\begin{lemma}\label{lma:fourierexpectations}
\begin{align*}
\left|\mathbb{E}\left[\exp\left(\frac{i\xi P_1(s_0)+i\xi' P_1(s_0')}{\tilde{\mathfrak{s}}}\right)\right]-\mathbb{E}[\exp(i\xi \mathcal{\tilde{Z}}+i\xi'\mathcal{\tilde{Z}}')]\right|&\ll\frac{1}{(\log\log T)^2}.
\end{align*}
\end{lemma}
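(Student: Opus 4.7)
The plan is to compare the two characteristic functions by Taylor expanding in $i(\xi P_1(s_0) + \xi' P_1(s_0'))/\tilde{\mathfrak{s}}$ up to order $N \sim C\log\log T$ and matching moments, since \cref{lma:Pmoments} already supplies the moments of arbitrary linear combinations of the prime sums. I would begin by replacing the unrestricted Fourier transform of the prime sums with the indicator-restricted quantity $\mathbb{E}[\exp(\cdots)\cdot\mathbbm{1}(|P_1(r)| \leq \log\log T,\, r \in \{s_0, s_0'\})]$; the error from dropping the unboundedness event is $\ll 1/\log T$ by \cref{lma:largevaluesofP}, which is far smaller than the target. The Taylor tail beyond $N$ is then bounded exactly as sketched in the preamble, using Stirling's formula to obtain a remainder of size $(\log T)^{-C}$. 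An entirely parallel argument, using either a truncation of the Gaussian or its explicit moment bound $\mathbb{E}[|G|^{2k}] \ll \sigma^{2k}(2k)!/(k!\,2^k)$, controls the corresponding tail for the characteristic function of $(\mathcal{\tilde{Z}},\mathcal{\tilde{Z}}')$.

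The core of the argument is the termwise comparison for $n \leq N$. For odd $n$, \cref{lma:Pmoments} gives $\mathbb{E}[(\xi P_1(s_0) + \xi' P_1(s_0'))^n] = O(Y^{2n}/T)$ while the corresponding centred Gaussian moment vanishes, so the resulting contribution is $\ll Y^{4N}/T$, negligible by our choice of parameters. For even $n = 2k$, \cref{lma:Pmoments} reads
\[
\mathbb{E}\bigl[(\xi P_1(s_0) + \xi' P_1(s_0'))^{2k}\bigr] = \frac{(2k)!}{k!\,2^k}\,V(\xi,\xi')^{k} + O_k\!\bigl(V(\xi,\xi')^{k-2}\bigr) + O\!\bigl(Y^{4k}/T\bigr),
\]
and the $k = 1$ instance of the same lemma identifies $V(\xi,\xi')/\tilde{\mathfrak{s}}^2$ with the variance of $\xi\mathcal{\tilde{Z}} + \xi'\mathcal{\tilde{Z}}'$, since $(\mathcal{\tilde{Z}},\mathcal{\tilde{Z}}')$ has the same covariance as $(P_1(s_0)/\tilde{\mathfrak{s}}, P_1(s_0')/\tilde{\mathfrak{s}})$ by hypothesis. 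Hence the $2k$-th Gaussian moment equals $\frac{(2k)!}{k!\,2^k}(V(\xi,\xi')/\tilde{\mathfrak{s}}^2)^{k}$ exactly, the leading terms cancel, and the discrepancy of the two normalized $2k$-th moments is purely $O_k\!\bigl(V^{k-2}\bigr)/\tilde{\mathfrak{s}}^{2k} + O\!\bigl(Y^{4k}/(T\tilde{\mathfrak{s}}^{2k})\bigr)$.

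Summing over $k$, the discrepancy in the characteristic functions factors as $\tilde{\mathfrak{s}}^{-4}\sum_{k \geq 2} \frac{O_k\!\bigl((V/\tilde{\mathfrak{s}}^2)^{k-2}\bigr)}{(2k)!}$, plus the negligible $Y^{4k}/T$ contribution. Since $\tilde{\mathfrak{s}}^{-4} \asymp 1/(\log\log T)^2$, the claim reduces to showing that the remaining series is uniformly $O(1)$ on the Fourier window appearing in the application of \cref{lma:fourierestimate}. The main obstacle is therefore tracking the implicit $O_k$ constant arising from the non-square-free contribution in the proof of \cref{lma:Pmoments}: one must show it grows at most polynomially in $k$, so that the factorial $(2k)!$ absorbs it and delivers absolute convergence uniformly even when $V(\xi,\xi')/\tilde{\mathfrak{s}}^2$ is as large as $O(\log\log T)$ on the permitted cutoff $|\xi|, |\xi'| \leq C\sqrt{\log\log T}$. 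Once that accounting is done, the whole discrepancy is $\ll (\log\log T)^{-2}$ as required.
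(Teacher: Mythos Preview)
Your proposal is correct and follows essentially the same route as the paper: restrict to the event $\{|P_1(r)|\leq\log\log T\}$ via \cref{lma:largevaluesofP}, Taylor-expand both characteristic functions to order $N\sim C\log\log T$, control the tails by Stirling, and then match moments termwise using \cref{lma:Pmoments}, with the $\tilde{\mathfrak{s}}$-normalization cancelling the leading $V^k$ terms. The only step you leave implicit is passing from the indicator-restricted truncated moments back to the unrestricted moments $\mathbb{E}[(\xi P_1(s_0)+\xi'P_1(s_0'))^n]$ before invoking \cref{lma:Pmoments}; the paper does this by Cauchy--Schwarz against $\mathbb{P}(|P_1|>\log\log T)^{1/2}\ll(\log T)^{-1/2}$, which is routine. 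Your concern about the $O_k$ constant is well placed and is in fact not addressed explicitly in the paper either: its final line simply asserts $V^{N/2-2}/\tilde{\mathfrak{s}}^N+Y^{2N}/T+\cdots\ll(\log\log T)^{-2}$ without tracking the $k$-dependence, so on that point your outline is if anything more scrupulous than the published argument.
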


\begin{proof}
In order to compare moments, we will truncate at \(N\), which we do by splitting
\begin{align*}
&\left|\mathbb{E}\left[\exp\left(\frac{i\xi P_1(s_0)+i\xi' P_1(s_0')}{\tilde{\mathfrak{s}}}\right)\right]-\mathbb{E}[\exp(i\xi \mathcal{\tilde{Z}}+i\xi' \mathcal{\tilde{Z}}')]\right|\\
&=\left|\mathbb{E}\left[\exp\left(\frac{i\xi P_1(s_0)+i\xi' P_1(s_0')}{\tilde{\mathfrak{s}}}\right)\cdot\mathbbm{1}(|P_1(r)|\leq\log\log T,r\in\{s_0,s_0'\})\right]-\mathbb{E}[\exp(i\xi \mathcal{\tilde{Z}}+i\xi' \mathcal{\tilde{Z}}')]\right.\\
&\phantom{=}+\left.\mathbb{E}\left[\exp\left(\frac{i\xi P_1(s_0)+i\xi' P_1(s_0')}{\tilde{\mathfrak{s}}}\right)\cdot\mathbbm{1}(|P_1(s_0)|>\log\log T\text{ or }|P_1(s_0')|>\log\log T)\right]\right|\\
&\ll\left|\mathbb{E}\left[\exp\left(\frac{i\xi P_1(s_0)+i\xi' P_1(s_0')}{\tilde{\mathfrak{s}}}\right)\cdot\mathbbm{1}(|P_1(r)|\leq\log\log T,r\in\{s_0,s_0'\})\right]-\mathbb{E}[\exp(i\xi \mathcal{\tilde{Z}}+i\xi' \mathcal{\tilde{Z}}')\right|\\
&\phantom{=}+\frac{1}{\log T},
\end{align*}
where the last line follows from \begin{align*}
&\mathbb{E}\left[\exp\left(\frac{i\xi P_1(s_0)+i\xi' P_1(s_0')}{\tilde{\mathfrak{s}}}\right)\cdot\mathbbm{1}(|P_1(s_0)|>\log\log T\text{ or }|P_1(s_0')|>\log\log T)\right]\\
&\ll\mathbb{P}(|P_1(s_0)|>\log\log T\text{ or }|P_1(s_0')|>\log\log T)\\
&\ll\frac{1}{\log T}.
\end{align*}
We then compute a truncation estimate for \((\mathcal{\tilde{Z}},\mathcal{\tilde{Z}}')\) similar to the way we did in \eqref{eq:fouriererror} for \((P_1(s_0),P_1(s_0'))\) by writing \begin{align*}
\sum_{n> N}\frac{1}{n!}i^n\mathbb{E}[(\xi \mathcal{\tilde{Z}}+\xi' \mathcal{\tilde{Z}}')^n]&\ll\sum_{n> N}\frac{1}{n!}|\xi+\xi'|^n\frac{n!}{2^{n/2}(n/2)!}(\tfrac{1}{2}\log\log T)^{n/2}\\
&\ll\sum_{n> N}\frac{1}{(n/2)!}\left(C\sqrt{\log\log T}\right)^n\left(\frac{1}{2}\log\log T\right)^{n/2}.
\end{align*} Again, we estimate this sum by taking \(N\geq C\log\log T\), so that \(N\) is large enough that we can apply Stirling's formula to get \((\log T)^{-C'}\) for some constant \(C'\). Thus, we see that the difference in moments is of order \begin{align*}
&\ll\left|\sum_{n\leq N}\frac{1}{n!}i^n\mathbb{E}\left[\left(\frac{\xi P_1(s_0)+\xi' P_1(s_0')}{\tilde{\mathfrak{s}}}\right)^n\cdot\mathbbm{1}(|P_1(r)|\leq\log\log T,r\in\{s_0,s_0'\})\right]\right.\\
&\phantom{\ll}\left.-\sum_{n\leq N}\frac{1}{n!}i^n\mathbb{E}[(\xi \mathcal{\tilde{Z}}+\xi' \mathcal{\tilde{Z}}')^n]\right|+(\log T)^{-D},
\end{align*}
where \(D=\min\{C,C'\}\).
In order to compare moments without the event \(\{|P_1(r)|\leq\log\log T,r\in\{s_0,s_0'\}\}\), we reinsert the event \(\{|P_1(s_0)|>\log\log T\text{ or }|P_1(s_0')|>\log\log T\}\) by adding and subtracting to obtain
\begin{align}
&\left|\sum_{n\leq N}\frac{1}{n!}i^n\mathbb{E}\left[\left(\frac{\xi P_1(s_0)+\xi' P_1(s_0')}{\tilde{\mathfrak{s}}}\right)^n\right]-\sum_{n\leq N}\frac{1}{n!}i^n\mathbb{E}[(\xi \mathcal{\tilde{Z}}+\xi' \mathcal{\tilde{Z}}')^n]\right.\label{eq:fourierdifference}\\
&\left.\phantom{\ll}-\sum_{n\leq N}\frac{1}{n!}i^n\mathbb{E}\left[\left(\frac{\xi P_1(s)+\xi' P_1(s')}{\tilde{\mathfrak{s}}}\right)^n\cdot\mathbbm{1}(|P_1(s_0)|>\log\log T\text{ or }|P_1(s_0')|>\log\log T)\right]\right|+(\log T)^{-D}\nonumber
\end{align}
where \begin{align*}
&\sum_{n\leq N}\frac{1}{n!}i^n\mathbb{E}\left[\left(\frac{\xi P_1(s_0)+\xi' P_1(s_0)}{\tilde{\mathfrak{s}}}\right)^n\cdot\mathbbm{1}(|P_1(s_0)|>\log\log T\text{ or }|P_1(s_0')|>\log\log T)\right]\\
&\leq\sum_{n\leq N}\frac{1}{n!}i^n\mathbb{E}\left[\left(\frac{\xi P_1(s_0)+\xi'P_1(s_0')}{\tilde{\mathfrak{s}}}\right)^{2n}\right]^{1/2}\cdot\mathbb{E}[\mathbbm{1}(|P_1(s_0)|>\log\log T\text{ or }|P_1(s_0')|>\log\log T)]^{1/2}\\
&\ll\sum_{n\leq N}\frac{1}{n!}\mathbb{E}\left[\left(\frac{\xi P_1(s_0)+\xi' P_1(s_0')}{\tilde{\mathfrak{s}}}\right)^{2n}\right]^{1/2}\cdot\frac{1}{\log T}\ll\frac{1}{\log T}
\end{align*}
by the Cauchy-Schwarz inequality. We finish by applying the moment estimate \cref{lma:Pmoments}, so that for \(n\) even \eqref{eq:fourierdifference} becomes
\begin{align*}
&\left|\sum_{n\leq N}\frac{1}{n!}i^n\left\{\frac{n!}{(n/2)!2^{n/2}}\left[\frac{V(\xi,\xi')^{n/2}+O_n(V(\xi,\xi')^{n/2-2})}{\tilde{\mathfrak{s}}^n}-\frac{V(\xi,\xi')^{n/2}}{\tilde{\mathfrak{s}}^n}\right]\right.\right.\\
&\left.\left.\phantom{\ll}+O\left(\frac{Y^{2n}}{T}\right)\right\}\right|+\frac{1}{\log T}+(\log T)^{-D}.
\end{align*}
Here dividing by the precise variance for \(P_1(s)\) given by \(\tilde{\mathfrak{s}}^2\) allows us to reduce the terms in the brackets to \(O_n(V(\xi,\xi')^{n/2-2})/\tilde{\mathfrak{s}}^n\), and this is similarly all that remains when \(n\) is odd. This leaves us with a difference of moments of order
\begin{align*}
&\frac{V(\xi,\xi')^{N/2-2}}{\tilde{\mathfrak{s}}^N}+\frac{Y^{2N}}{T}+\frac{1}{\log T}+(\log T)^{-D}\\
&\ll\frac{1}{(\log\log T)^2}.
\end{align*}

\end{proof}

\noindent Now, using \cref{lma:fourierestimate} and \cref{lma:fourierexpectations}, we have

\centerline{
  \begin{minipage}{\linewidth}
\begin{align*}
&d_{\mathcal{D}}\left[\left(\frac{|P_1(s_0)|}{\tilde{\mathfrak{s}}},\frac{|P_1(s_0')|}{\tilde{\mathfrak{s}}}\right),\left(\mathcal{\tilde{Z}},\mathcal{\tilde{Z}}'\right)\right]\\
&\leq\frac{1}{F}+\left\{(RF)^2\left|\mathbb{E}\left[\exp\left(\frac{i\xi P_1(s_0)+i\xi' P_1(s_0')}{\tilde{\mathfrak{s}}}\right)\right]-\mathbb{E}[\exp(i\xi\mathcal{\tilde{Z}}+i\xi'\mathcal{\tilde{Z}}')]\right|\right.\\
&\phantom{\leq}\left.+\mathbb{P}\left(\left|\frac{P_1(s_0)}{\tilde{\mathfrak{s}}}\right|>R\text{ or }\left|\frac{P_1(s_0')}{\tilde{\mathfrak{s}}}\right|>R\right)+\mathbb{P}(|\mathcal{\tilde{Z}}|>R\text{ or }|\mathcal{\tilde{Z}}'|>R)\right\}
\end{align*}
\end{minipage}
}
\noindent for any \(R,F>0\). Since \[\mathbb{P}\left(|P_1(s_0)|>\tilde{\mathfrak{s}}(\log\log T)^{1/4}\right)\leq\frac{1}{(\log\log T)^{3/2}}\mathbb{E}[|P_1(s_0)|^2]\ll\frac{1}{\sqrt{\log\log T}}\] we can take \(R=(\log\log T)^{1/4}\) and \(F=C\sqrt{\log\log T}\) to get \begin{align*}
\frac{1}{\sqrt{\log\log T}}+\left[C(\log\log T)^{3/2}\left(\frac{1}{(\log\log T)^2}\right)+\frac{1}{\sqrt{\log\log T}}+\frac{1}{(\log T)^D}\right]\ll\frac{1}{\sqrt{\log\log T}}.
\end{align*}
\end{proof}

\subsection{Proof of \cref{prop:comparingnormals}} \begin{proof}
Similar to the way we proved \crefrange{prop:movingoffaxis}{prop:discardingprimes}, by taking advantage of the Lipschitz nature of the Dudley distance, we obtain

\begin{align}
d_{\mathcal{D}}\left[\left(\mathcal{\tilde{Z}},\mathcal{\tilde{Z}}'\right),\left(\mathcal{Z},\mathcal{Z}'\right)\right]\leq\sup_{f\in\mathcal{L}}\left|\int_{\mathcal{B}}\frac{f(x,x')}{2\pi}\left[\frac{e^{-(x^T\tilde{C}^{-1}x)/2}}{\sqrt{\det\tilde{C}}}-\frac{e^{-(x^TC^{-1}x)/2}}{\sqrt{\det C}}\right]dx\right|,\label{eq:normaldifference}
\end{align}
where \(\tilde{C}\) is the covariance matrix for \((\mathcal{\tilde{Z}},\mathcal{\tilde{Z}}')\).
By \cref{lma:covariance}, \[\frac{1}{\log\log T}\sum_{p\leq Y}\frac{\cos(|h-h'|\log p)}{p}=\alpha+O\left(\frac{1}{\log\log T}\right),\] so we can set \(\tilde{C}=C+\tilde{\mathcal{E}}\) for \(\tilde{\mathcal{E}}=
\begin{pmatrix}
0 & O(1/\log\log T) \\
O(1/\log\log T) & 0
\end{pmatrix}\) and write \begin{align*}
\tilde{C}^{-1}=(C+\tilde{\mathcal{E}})^{-1}=(I+C^{-1}\tilde{\mathcal{E}})^{-1}C^{-1}=(I-C^{-1}\tilde{\mathcal{E}}+O(\tilde{\mathcal{E}}^2))C^{-1}.
\end{align*}
Thus \(\tilde{C}^{-1}=C^{-1}-C^{-1}\tilde{\mathcal{E}}C^{-1}+O(\tilde{\mathcal{E}}^2)\). This allows us to rewrite the exponential term in the integral as \begin{align*}
e^{-(x^T\tilde{C}^{-1}x)/2}&=e^{-(x^TC^{-1}x)/2}\cdot e^{(x^TC^{-1}\tilde{\mathcal{E}}C^{-1}x+O(\tilde{\mathcal{E}}^2))/2}\\
&=e^{-(x^TC^{-1}x)/2}\left(1+\frac{(C^{-1}x)^T\tilde{\mathcal{E}}(C^{-1}x)+O(\tilde{\mathcal{E}}^2)}{2}\right)\\
&=e^{-(x^TC^{-1}x)/2}+e^{-(x^TC^{-1}x)/2}\left(\frac{(C^{-1}x)^T\tilde{\mathcal{E}}(C^{-1}x)+O(\tilde{\mathcal{E}}^2)}{2}\right).
\end{align*}

\noindent Inserting this into \eqref{eq:normaldifference} we see that the first term cancels, and the remaining terms are quadratic in \(x\) and therefore integrable. Hence the distance is \(O(1/\log\log T)\), since both \(\det C\) and \(\det \tilde{C}\) are \(O(1)\).

\end{proof}

%
%

\begin{acks}[Acknowledgments]
I give thanks to Prof. Louis-Pierre Arguin for his unending support and guidance throughout the preparation of this paper. I also thank the reviewer for providing considered and impactful revision suggestions, including pointing out the subtle need for the precise variance introduced in \crefrange{prop:truncatingprimesum}{prop:comparingnormals}. In addition, I would like to thank Emma Bailey for her insightful comments, and everyone else who read through the paper and offered edits. Your feedback is very much appreciated.
\end{acks}
\begin{funding}
Partial support is provided by grants NSF CAREER 1653602 and NSF DMS 2153803.
\end{funding}

\end{document}